\newtheorem{thm}{\bf Theorem}[section]
\newtheorem{eg}[thm]{\bf Example}
\newtheorem{prop}[thm]{\bf Proposition}
\newtheorem{cor}[thm]{\bf Corollary}
\newtheorem{rem}[thm]{\bf Remark}
\newtheorem{mydef}[thm]{\bf Definition}
\newtheorem{lem}[thm]{\bf Lemma}
\newtheorem{conjecture}[thm]{\bf Conjecture}
\newcommand{\CC}{\mathbb{C}}
\newcommand{\ZZ}{\mathbb{Z}}
\newcommand{\QQ}{\mathbb{Q}}
\newcommand{\NN}{\mathbb{N}}
\newcommand{\PP}{\mathbb{P}}
\newcommand{\br}{\mathbf{r}}
\DeclareMathOperator{\Rep}{\mathrm{Rep}}
\DeclareMathOperator{\Hom}{\mathrm{Hom}}
\DeclareMathOperator{\GL}{\mathrm{GL}}
\DeclareMathOperator{\Fl}{\mathrm{Fl}}
\DeclareMathOperator{\Gr}{\mathrm{Gr}}
\DeclareMathOperator{\rank}{\mathrm{rank}}
\DeclareMathOperator{\NE}{\mathrm{NE}}
\DeclareMathOperator{\Mat}{\mathrm{Mat}}
\DeclareMathOperator{\HR}{\mathrm{RH}}
\newcommand{\ab}{\text{\rm ab}}
\renewcommand{\emptyset}{\varnothing}
\newcolumntype{C}[1]{>{\centering\let\newline\\\arraybackslash\hspace{0pt}}m{#1}}
\title{A rim-hook rule for quiver flag varieties}
\author{Wei Gu}
\address[W.~Gu]{{Center of Mathematical Sciences and Applications \\ Harvard University\\ Cambridge, MA 02138}}
\email{weigu@cmsa.fas.harvard.edu}
\author{Elana Kalashnikov}
\address[E.~Kalashnikov]{{Department of Mathematics \\ 
Harvard University \\ 
Cambridge, MA 02138}}
\address[E.~Kalashnikov]{{Faculty of Mathematics \\ 
Higher School of Economics \\ 
Moscow, Russia}}
\email{kalashnikov@math.harvard.edu}
\begin{document}
\begin{abstract} The rim-hook rule for quantum cohomology of the Grassmannian allows one to reduce quantum calculations to classical calculations in the cohomology of the Grassmannian. We use the Abelian/non-Abelian correspondence for cohomology to prove a rim-hook removal rule for the cohomology of quiver flag varieties. Quiver flag varieties are generalisations of type A flag varieties; this result is new even in the flag case. This gives an effective way of computing products in their cohomology, reducing computations to that in the cohomology ring of the Grassmannian. We then prove a quantum rim-hook rule for Fano quiver flag varieties (including type A flag varieties). As a corollary, we see that the Gu--Sharpe mirror to a Fano quiver flag variety computes its quantum cohomology. 
\end{abstract}

\maketitle

\section*{Introduction}\label{sec:intro}
Consider a smooth projective variety that can be described as a GIT quotient $X_G=V//_{\theta}G$ of a $\CC$-vector space by a reductive algebraic group $G$, with stability condition $\theta \in \chi(G)$.  If $T \subset G$ is a maximal torus, then we can also view $\theta$ and $V$ as $T$-representations, and consider the GIT quotient $X_T:=V//_{\theta}T$. The projective variety $X_T$ is a toric variety, which we call the \emph{abelianisation} of $X_G$.  A theorem of Martin \cite{Martin2000} gives a description of the algebra structure of $H^*(X_G)$ via that of $H^*(X_T)$. Martin's result is striking when one considers the simplicity of the cohomology of toric varieties compared with the rich structure of $X_G$, examples of which include Grassmannians and type A flag varieties. This result, however, has so far had limited applications computationally. In this paper, we fill this gap: we use Martin's result to give a rim-hook removal rule for the cohomology  and quantum cohomology of $X_G$, when $X_G$ is a quiver flag variety. This is new even in the case of a type A flag variety.  

Rim-hook removals were introduced in \cite{bertram} as a powerful tool in computations in the quantum cohomology of the Grassmannian: essentially, they allow one to reduce quantum calculations to classical calculations, controlled by the Littlewood--Richardson coefficients. In this paper, we prove a rim-hook removal rule for the \emph{classical} cohomology ring of quiver flag varieties (including type A flag varieties). This gives a way to reduce computations in the cohomology of flag varieties and quiver flag varieties to computations in Grassmannians. Specifically, it allows one to compute the structure constants of the cohomology in a particular basis. We also prove a quantum rim-hook rule, which we discuss more below. 

Quiver flag varieties \cite{Craw2011, King1994} are a generalisation of type A flag varieties. Quiver flag varieties are constructed as smooth projective GIT quotients of the space of representations of acyclic quivers with a unique source, where the stability condition  $\theta$ is always the same. Let $Q=(Q_0,Q_1)$ be an acyclic quiver with a unique source, with arrows $Q_1$ and vertices $Q_0$. Label the vertices $Q_0:=\{0,\dots,\rho\}$, so that $n_{ij}$, the number of arrows from $i$ to $j$, vanishes when $i \geq j$. Fix a dimension vector $\br:=(r_0:=1,r_1,\dots,r_\rho)$.  The quiver flag variety associated to this quiver is denoted $M_\theta(Q,\br)$.   See Definition \ref{defn:quiverflagvariety} for the full definition. The abelianisation of a quiver flag variety $M_\theta(Q,\br)$ is another quiver flag variety, denoted $M_\theta(Q^{ab},\br)$. 

Craw \cite{Craw2011} also gives an equivalent construction of quiver flag varieties as towers of Grassmannian bundles. It is this second construction which is most relevant for this paper, so we sketch it now, in order to state the main results. We construct the quiver flag variety $M_\theta(Q,\br)$ inductively. Consider first the Grassmannian of quotients $M_1:=\Gr(\CC^{n_{01}},r_1)$, with its tautological quotient bundle $W_1$, and trivial line bundle $W_0$. Define $M_2$ to be the relative Grassmannian $\Gr(W_0^{\oplus n_{02}} \bigoplus W_1^{\oplus n_{12}},r_2)$. We label the relative quotient bundle on $M_2$ as $W_2$, and abuse notation to label the pullbacks of $W_0$ and $W_1$ as before.  Continue in this way, defining 
\[M_i:=\Gr(\bigoplus_{a \in Q_1, t(a)=i} W_{s(a)},r_i).\]
We let $s_i=\rank(\bigoplus_{a \in Q_1, t(a)=i} W_{s(a)}).$ Then Craw \cite{Craw2011} shows that $M_\theta(Q,\br)$ is isomorphic to $M_\rho$, a smooth projective variety of dimension $\sum_{i=1}^\rho r_i(s_i-r_i)$. 
For each $i=1,\dots,\rho$, and partition $\lambda$, let $s^i_\lambda$ denote the Schur polynomial in the Chern roots of $W_i$ indexed by $\lambda$. We label  the Chern roots $x_{i1},\dots,x_{ir_i}$. Consider the set $\mathcal{S}(Q)$, defined as
\[\mathcal{S}(Q):=\{s^1_{\lambda_1} \cdots s^\rho_{\lambda_\rho} \mid \lambda_i \text{ fits into a } r_i\times(s_i-r_i) \text{ rectangle}\}.\]
Our main result in classical cohomology is as follows. For more details, see Theorem \ref{thm:cohomology} and Proposition \ref{prop:hookrim}.
\begin{thm} The set of monomials $\mathcal{S}(Q)$ form a vector space basis for the cohomology ring $H^*(M_\theta(Q,\br))$. If $\mu=(\mu_1,\dots,\mu_{r_i})$ is a partition too wide to fit into an $r_i \times (s_i-r_i)$ box, then $s^i_\mu$ can be expanded as a signed sum of rim-hook removals of $\mu$ with coefficients which are polynomials in the Chern classes of the $W_j$. 
\end{thm}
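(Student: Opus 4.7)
The statement splits into two independent pieces: a basis assertion and a reduction rule. For the basis, the plan is to iterate the Leray--Hirsch theorem along the tower of Grassmannian bundles $\pi_i : M_i \to M_{i-1}$, where $M_i = \Gr(\bigoplus_{t(a)=i} W_{s(a)}, r_i)$. The standard presentation of the cohomology of a Grassmannian bundle exhibits $H^*(M_i)$ as a free module over $H^*(M_{i-1})$ with basis $\{s^i_\lambda(W_i) : \lambda \text{ fits in } r_i \times (s_i - r_i)\}$. Induction on $i$ from $1$ up to $\rho$ then yields that $\mathcal{S}(Q)$ is a basis for $H^*(M_\rho) = H^*(M_\theta(Q,\br))$.

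For the rim-hook reduction, the starting point is the tautological exact sequence on $M_i$,
\[ 0 \to S_i \to \bigoplus_{t(a)=i} W_{s(a)} \to W_i \to 0, \]
together with the rank equality $\rank(S_i) = s_i - r_i$, which forces $c_k(S_i) = 0$ for every $k > s_i - r_i$. Matching graded pieces of $c(S_i) = c(\bigoplus_a W_{s(a)}) \cdot c(W_i)^{-1}$ and using $c(W_i)^{-1}|_{\deg m} = (-1)^m h_m(W_i)$ translates this into the relations
\[ \sum_{m=0}^{k} (-1)^m h_m(W_i) \, c_{k-m}\!\left(\bigoplus_{t(a)=i} W_{s(a)}\right) = 0 \qquad (k > s_i - r_i) \]
in $H^*(M_i)$. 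Each such relation rewrites $h_k(W_i)$ as an explicit polynomial in $h_0(W_i), \ldots, h_{k-1}(W_i)$ whose coefficients are among the Chern classes of $\bigoplus_a W_{s(a)}$; since these Chern classes are polynomials in the Chern classes of the source bundles $W_{s(a)}$, the coefficients are of the type demanded by the theorem.

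Now take a partition $\mu$ too wide for the $r_i \times (s_i - r_i)$ rectangle. Applying the Jacobi--Trudi identity $s^i_\mu = \det(h_{\mu_a + b - a}(W_i))_{1 \leq a, b \leq r_i}$ and substituting the relations above into the oversized $h$-entries produces a linear combination of smaller Schur polynomials $s^i_\nu(W_i)$ with Chern-class coefficients. The combinatorial claim is that the partitions $\nu$ which appear are exactly those obtained from $\mu$ by removing a rim-hook, carrying the expected sign $(-1)^{\mathrm{ht}(R)-1}$. This identification is the main technical hurdle: it is a symmetric-function calculation in a quotient ring, and morally amounts to a Murnaghan--Nakayama rule with Chern-class coefficients. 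The plan is to handle one oversized $h$ at a time, inducting on the excess $\mu_1 - (s_i - r_i)$, and to invoke a single-step rim-hook identity at each stage, thereby reducing to partitions that already fit inside the prescribed rectangle.
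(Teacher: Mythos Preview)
Your proposal is essentially correct, but the route you sketch for the rim-hook reduction is more convoluted than necessary and differs from the paper's argument.

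For the basis, your Leray--Hirsch argument is standard and valid; the paper obtains the same conclusion by first proving the rim-hook rule (Proposition~\ref{prop:hookrim}), deducing that the $s^i_\lambda$ with $\lambda$ in the box generate, and then matching dimensions via the Grassmann bundle structure.

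For the reduction rule, the paper works not with the tautological sequence and Jacobi--Trudi but with Martin's presentation and the bi-alternant (ratio of alternants) formula
\[
\prod_{j<k}(x_{ij}-x_{ik})\cdot s^i_\mu \;=\; \det\bigl(x_{ik}^{\,r_i - a + \mu_a}\bigr)_{1\le a,k\le r_i}.
\]
The top row carries the \emph{uniform} exponent $r_i - 1 + \mu_1 \geq s_i$, so one factors $x_{ik}^{s_i}$ from every entry and replaces it using the single-variable relation~\eqref{eqn:lowerdegree} coming from the abelianised quiver. Row-multilinearity then gives the exact formula of Proposition~\ref{prop:hookrim} in one step, since a bi-alternant with a shifted top-row exponent is by definition $\HR^{s_i}(s_{\mu + \{s_i - k\}})$ via the formula from~\cite{bertram}. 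No induction on the excess and no Murnaghan--Nakayama argument is needed.

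Your approach can be made to give the same formula: the relation you derive, $h_k = \sum_{j=1}^{s_i}(-1)^{j+1} c_j\bigl(\bigoplus_a W_{s(a)}\bigr)\, h_{k-j}$, has coefficients independent of $k$, so substituting it \emph{simultaneously} into every entry of the first Jacobi--Trudi row and using row-linearity yields a sum of Jacobi--Trudi determinants for the sequences $(\mu_1 - j, \mu_2, \ldots, \mu_{r_i})$; straightening these is precisely the rim-hook dictionary. Your phrases ``handle one oversized $h$ at a time'' and ``inducting on the excess'' suggest you have not spotted this uniformity; substituting entry-by-entry with independent shifts makes the combinatorics genuinely unpleasant and is not what you want. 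The paper's bi-alternant argument makes the uniformity manifest from the outset, and has the further advantage of carrying over verbatim to the quantum setting (Theorem~\ref{thm:qhookrim}), where the relevant relations live in the abelianised quantum cohomology rather than in a Grassmann-bundle presentation.
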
 
In order to obtain the quantum rim-hook removal rule, we show the following description of the quantum cohomology of a Fano quiver flag variety (for full details, see Theorem \ref{thm:arrowquantum} below). Let $M_\theta(Q,\br)$ be a Fano quiver flag variety. Let $I^{ab}_q$ be the ideal generated by 
\begin{equation} \label{eq:qr} \prod_{\substack{a \in Q_1\\ t(a)=i}}\prod_{k=1}^{r_{s(a)}} (-x_{s(a)k}+x_{t(a)j})-(-1)^{r_i-1}q_i \prod_{\substack{a \in Q_1\\s(a)=i}}\prod_{k=1}^{r_{t(a)}} (-x_{s(a)j}+x_{t(a)k}).\end{equation}
\begin{thm}\label{thm:intro2} Let $M_\theta(Q,\br)$ be a Fano quiver flag variety. Then
\[QH(M_\theta(Q,\br)) \cong \CC[x_{ij},q_i: 1 \leq i \leq \rho, 1 \leq j \leq r_i]^W/I_q\]
where $I_q$ is the ideal generated by $f\in \CC[x_{ij},q_i: 1 \leq i \leq \rho, 1 \leq j \leq r_i]^W$ such that $\omega f \in I^{ab}_q$ and $\omega=\prod_{i=1}^\rho \prod_{j<k}(-x_{ij}+x_{ik})$. The group $W=\prod S_{r_i}$ acts by each factor permuting the $x_{i1},\dots,x_{ir_i}$. 
\end{thm}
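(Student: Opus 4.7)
The plan is to derive this theorem as a quantum analogue of Martin's classical description $H^*(X_G) \cong H^*(X_T)^W/\mathrm{ann}_W(\omega)$, by combining Batyrev's presentation of the quantum cohomology of the toric abelianisation with a Fano version of the quantum Abelian/non-Abelian correspondence. Since the authors have already used the classical Martin theorem as the engine for their rim-hook rule, the natural move is to upgrade each ingredient by one quantum level.

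The first step is to identify $QH(M_\theta(Q^{ab},\br)) \cong \CC[x_{ij},q_i]/I^{ab}_q$. The abelianisation is a smooth projective toric variety, obtained by Craw's construction applied to $Q^{ab}$, and the linear forms $-x_{s(a)k}+x_{t(a)j}$ are the first Chern classes of the toric line bundles indexed by arrows of $Q^{ab}$. Batyrev's theorem then yields the presentation in \eqref{eq:qr}: the left-hand product is the Chern class of the rank-$s_i$ split bundle that appears in the classical relations, while the right-hand product is the minimal quantum correction associated with the Mori ray dual to $q_i$, and the sign $(-1)^{r_i-1}$ reflects the orientation of the corresponding Stanley--Reisner relation. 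This part should be a direct toric computation.

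The second step is the quantum Abelian/non-Abelian correspondence itself. The target statement is $QH(X_G) \cong QH(X_T)^W/\mathrm{ann}_W(\omega)$ under the stated quantum parameter identification. One approach is to work via the $I$-/$J$-function comparison of Ciocan-Fontanine--Kim--Sabbah (adapted to quiver flag varieties by Gu--Sharpe), from which the ring-level statement follows in the Fano case because there are no non-trivial mirror map corrections. Alternatively, one can argue directly: the Fano hypothesis bounds the $q$-degree of any potential extra quantum correction, forcing all corrections on the non-Abelian side to come from the Mori generators already present in $I^{ab}_q$; combined with the classical Martin theorem applied fibrewise in the $q$-adic filtration, this gives the desired isomorphism. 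The main obstacle will be controlling precisely which Gromov--Witten contributions survive on the non-Abelian side and verifying that the only ideal needed in the Weyl-invariant ring is $\mathrm{ann}_W(\omega)$ with no further quantum terms; the Fano assumption is what makes this tractable. Once this is in place, taking Weyl invariants of the presentation from step one and quotienting by $\mathrm{ann}_W(\omega)$ completes the proof.
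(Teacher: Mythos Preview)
Your high-level strategy matches the paper's: Batyrev's presentation for the toric abelianisation, plus the Ciocan-Fontanine--Kim--Sabbah quantum Abelian/non-Abelian correspondence. But several load-bearing steps are missing or misdescribed.

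First, the sign $(-1)^{r_i-1}$ does not come from Batyrev's relations for $X_T$. The abelianisation has one quantum parameter $q_{ij}$ for each vertex of $Q^{ab}$, and Batyrev's relations involve these. The sign enters only through the specialisation map $p\colon q_{ij}\mapsto(-1)^{r_i-1}q_i$ prescribed by the correspondence; it is this specialised ring $\overline{QH}(X_T)$ that appears in $I^{ab}_q$, not the raw quantum cohomology of $X_T$.

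Second, and more seriously, the CKS conjecture does \emph{not} say $QH(X_G)\cong QH(X_T)^W/\mathrm{ann}_W(\omega)$. Its statement involves auxiliary classes $\zeta,\zeta'$ determined by $\tilde{\alpha}\cup\omega=\zeta*\omega$, and a change of variables $\tilde{t}(t)$. To get the clean ring presentation you must show, for every basis element $\alpha\in\mathcal{S}(Q)$, that $\tilde{\alpha}*\omega=\tilde{\alpha}\cup\omega$ in $\overline{QH}(X_T)$ (so $\zeta=\tilde{\alpha}$ and the change of variables is trivial). This is a separate degree argument: the shape of the basis $\mathcal{S}(Q)$ ensures that in $\omega\cdot s^i_{\lambda_i}$ no variable $x_{ij}$ appears to power $\geq s_i$, so the quantum corrections in \eqref{eq:qr} never fire. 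Your remark about ``no non-trivial mirror map corrections'' concerns $I=J$, which is a different (though also necessary) vanishing, used to establish that CKS holds at all; it does not by itself give $\zeta=\tilde{\alpha}$.

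Finally, even once you know $\widetilde{P_G}\cup\omega=P_T*\omega$ for polynomials $P$ in the basis, you still have to argue in both directions: that $P_T*\omega\in I^{ab}_q$ forces $P_G=0$ (this uses Martin's integration formula, not just the ring statement), and that \emph{all} quantum relations arise this way (the paper does this by a $q_i\to 0$ degeneration argument \`a la Cox--Katz). Your ``$q$-adic filtration'' gesture points in this direction but is not a proof; the alternative ``direct'' approach you sketch is too vague to control the relations.
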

This is exactly the quantum generalisation of the theorem in \cite{Martin2000}, appearing below as Theorem \ref{thm:martin}. The quantum relations \eqref{eq:qr} allows one to see the quantum cohomology of a quiver flag variety as that of a product of Grassmannians (one for each vertex) together with two different types of corrections: at the $i^{th}$ vertex, one set of corrections is classical and arises from arrows \emph{into} the vertex, and the other set is quantum, arising from arrows \emph{out} of the vertex. 

As a corollary, we see that the critical locus of the Gu--Sharpe \cite{GuSharpe} mirror exactly computes the quantum cohomology of a Fano quiver flag variety (see \S \ref{sec:gusharpe}).  

Various quantum versions of the Abelian/non-Abelian correspondence have appeared in the literature, both conjectural and proven: \cite{CiocanFontanineKimSabbah2008}, \cite{Webb2018}, \cite{kalashnikov},\cite{GuSharpe}. The strongest statement is the Abelian/non-Abelian correspondence conjecture  \cite{CiocanFontanineKimSabbah2008} (appearing as Conjecture \ref{conj:original} below); this was proven by Webb \cite{Webb2018} for GIT quotients of vector spaces of Fano index at least 2 with certain nice torus actions. We show that for Fano quiver flag varieties, the condition  of \cite{Webb2018} on the Fano index can be dropped; that is, we show that the mirror map for a Fano quiver flag variety is trivial (just as it is for toric varieties). It then follows from \cite{Webb2018} that the conjecture of \cite{CiocanFontanineKimSabbah2008} holds for Fano quiver flag varieties. This is the main ingredient in the proof of Theorem \ref{thm:intro2}. 

 Theorem \ref{thm:intro2} also implies the quantum version of the rim-hook removal rule for a Fano quiver flag variety. 
\begin{thm}[see Theorem \ref{thm:qhookrim} for details] The set $\mathcal{S}(Q)$ defined above form a vector space basis for the small quantum cohomology ring $QH^*(M_\theta(Q,\br))$. If  $\mu=(\mu_1,\dots,\mu_{r_i})$ is a partition too wide to fit into an $r_i \times (s_i-r_i)$ box, then $s^i_\mu$ can be expanded as a signed sum of rim-hook removals of $\mu$ with coefficients which are polynomials in the Chern classes of the $W_j$ and the quantum parameters. 
\end{thm}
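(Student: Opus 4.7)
The plan is to transfer the classical rim-hook rule of Proposition~\ref{prop:hookrim} to the quantum setting using the presentation of $QH^*(M_\theta(Q,\br))$ given by Theorem~\ref{thm:intro2}. The basis statement is the easy half: standard flatness of small quantum cohomology makes $QH^*(M_\theta(Q,\br))$ free as a $\CC[q_1,\dots,q_\rho]$-module on any $\CC$-basis of $H^*(M_\theta(Q,\br))$, so Theorem~\ref{thm:cohomology} immediately upgrades $\mathcal{S}(Q)$ to a quantum basis.

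For the removal rule, I would work entirely at the abelian level in $\CC[x_{ij},q_i]^W/I_q$, pulling generators back from $I^{ab}_q$ via Theorem~\ref{thm:intro2}. Fix a vertex $i$. Collecting the relations \eqref{eq:qr} as $j$ ranges over $1,\dots,r_i$ produces a $q_i$-deformation of the classical identities $c_k(S_i)=0$ for $k>s_i-r_i$, where $S_i$ is the tautological subbundle at vertex $i$; the correction term is proportional to $q_i$ and involves Chern classes of the bundles $W_{t(a)}$ corresponding to arrows out of vertex $i$. The classical rim-hook rule of Proposition~\ref{prop:hookrim} is a purely formal consequence of these vanishings combined with a Jacobi--Trudi-type identity. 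Replaying exactly the same formal manipulation with the deformed relation in place of the classical one yields the expansion of $s^i_\mu$, for $\mu$ wider than $s_i-r_i$, as a signed sum of rim-hook removals $s^i_{\mu'}$ with coefficients polynomial in the Chern classes of the $W_j$ and in $q_i$.

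The main obstacle is showing that the coefficients produced by this deformed expansion are genuinely polynomial (not merely rational) in the Chern classes and quantum parameters, and that the signs match those predicted by rim-hook combinatorics. I would extract these coefficients by comparing the deformed Jacobi--Trudi expansion at vertex $i$ term-by-term with its classical analogue, identifying the quantum contribution as the natural generalisation of Bertram's $\pm q$ rim-hook correction, twisted by the outgoing-arrow factors $\prod_{a:s(a)=i}\prod_k(-x_{ij}+x_{t(a)k})$. The Fano hypothesis is what keeps this deformation well-posed: it guarantees that the ideal $I_q$ retains the codimension needed for the expansion to close up and that no unbounded cascade of $q$-corrections arises in the reduction.
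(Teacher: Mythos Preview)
Your approach is essentially the paper's: use the presentation of $QH^*(M_\theta(Q,\br))$ from Theorem~\ref{thm:intro2} (equivalently Theorem~\ref{thm:arrowquantum}) and then rerun the classical argument of Proposition~\ref{prop:hookrim} with the quantum-deformed relation in place of the classical one. The paper's proof of Theorem~\ref{thm:qhookrim} is literally one line: ``exactly as in the proof of Proposition~\ref{prop:hookrim}.'' Concretely, one writes $\omega_i\, s^i_\mu$ as a bialternant determinant, observes that the top-row exponents $r_i-1+\mu_1$ are at least $s_i$, substitutes the expanded relation~\eqref{eqn:qlowerdegree} for $x_{ij}^{s_i}$ in the top row, and applies multilinearity together with the rim-hook determinant identity from \cite{bertram}. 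Your Jacobi--Trudi framing (first symmetrise the relations~\eqref{eq:qr} to quantum analogues of $c_k(S_i)=0$, then expand) is a legitimate variant, but the paper bypasses the symmetrisation step by working with the non-invariant relation directly inside the determinant.

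Your third paragraph overstates the difficulty. The ``main obstacle'' you flag---polynomiality of the coefficients and the rim-hook signs---is not an obstacle at all: both drop out automatically from multilinearity of the determinant and the bialternant rim-hook formula, exactly as in the classical case. There is nothing to extract or compare term-by-term. Likewise, your invocation of the Fano hypothesis here is misplaced. Fano is used \emph{upstream}, to prove the presentation (via the Abelian/non-Abelian correspondence and the triviality of the mirror map, \S\ref{sec:proofconj}); once Theorem~\ref{thm:arrowquantum} is in hand, the rim-hook rule is a purely algebraic consequence and needs no further geometric input. Termination of the reduction (no ``unbounded cascade'') follows from the fact that each application strictly decreases the width of the partition at vertex~$i$, while the coefficients introduced at other vertices are elementary symmetric polynomials $s^j_{(k)^t}$ of width~$1$, which always fit.
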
 

\paragraph{\emph{Plan of the paper}}
We begin by stating Martin's theorem precisely, and, as a warm-up, explaining the relationship between his description of the cohomology of the Grassmannian and other descriptions. We then review quiver flag varieties, and prove the first theorem above. We then give a brief introduction to quantum cohomology, before deriving the second theorem from \cite[Conjecture 3.7.1]{CiocanFontanineKimSabbah2008} for Fano quiver flag varieties.  In the last part of the paper, we show that \cite[Conjecture 3.7.1]{CiocanFontanineKimSabbah2008} holds for Fano quiver flag varieties, by showing that it is implied by the Abelian/non-Abelian correspondence for $I$-functions of \cite{Webb2018}. 

\subsection*{Acknowledgments}
The second author is very grateful for helpful conversations with Anders Buch, Tom Coates, Alastair Craw, Allen Knutson, and Lauren Williams.

\section{Martin's theorem and the Grassmannian}
In this section, we state Martin's theorem and present an explicit isomorphism between the two descriptions of the cohomology of the Grassmannian. We describe how to use Martin's theorem to carry out multiplication in the cohomology ring of  quiver flag variety.
\subsection{The statement}
We state Martin's theorem only in the generality we need.  Let $V$ be a complex vector space, and $G$ a reductive algebraic group acting on $V$. Let $\theta \in \chi(G)$ be a character of $G$. As $V$ is a vector space, a character is equivalent to a $G$-linearised line bundle (that is, a line bundle with a compatible $G$ action) on $V$, so this data defines a GIT quotient $X_G:=V//_\theta G=V^{ss}(\theta)/G$. Here $V^{ss}(\theta)$ represents the semi-stable elements of $\theta$.  From now on, we assume that $X_G$ is smooth: that is, we assume that $V^{ss}(\theta)=V^{s}(\theta)$ and $G$ acts freely on $V^{ss}(\theta)$. We can obtain a class of vector bundles, called \emph{representation theoretic vector bundles} on $X_G$ via this description. If $E$ is another representation of $G$, then consider the quotient $E_G:=V^{ss}(G)\times E/G$, where $G$ acts on $V^{ss}(G)\times E$ via the diagonal action. The quotient $E_G$ is a vector bundle on $X_G$ via the natural projection map.

Let $T$ be a maximal torus of $G$. The character $\theta$ is naturally a character of $T$ as well, so we can define a GIT quotient $X_T:=V//_\theta T$. We assume that there are no strictly semi-stable elements of this GIT quotient. The two GIT quotients $X_G$ and $X_T$ are related by an intermediate space  $V^{ss}(G)/T$ and two maps. Let $i:V^{ss}(G)/T \to V^{ss}(T)/T=X_T$ be the natural inclusion, and $\pi:V^{ss}(G)/T \to V^{ss}(G)/G=X_G$ be the projection. 

Denote the set of roots of $G$ to be $R$; that is, $R$ is the set of weights of $T$ that appear in the representation of $T$ on $\mathfrak{g}\otimes \CC$. Let $R^+$ be a choice of positive roots, and let $R^-$ denote the corresponding negative roots. A root $\alpha \in R$ defines an action of $T$ on $\CC$, and hence a line bundle on $X_T$. We denote this line bundle $L_\alpha$. Let $W$ denote the Weyl group of $T$. Define $\omega=\sqrt{\frac{(-1)^{|R^+|}}{|W|}}\prod_{\alpha \in R^+} c_1(L_\alpha)$. 
\begin{thm}\label{thm:martin}\cite{Martin2000,stromme} There is a surjective map of algebras $H^*(X_T,\QQ)^W \to H^*(X_G,\QQ)$ whose kernel is given by the ideal $(a: a \in H^*(X_T,\QQ)^W \mid a \cdot \omega =0)$.
\end{thm}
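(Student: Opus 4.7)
The plan is to factor the desired algebra map through the intermediate space $Y := V^{ss}(\theta)/T$ (with $V^{ss}(\theta)$ the $G$-semistable points) using the diagram
\[
X_T \xleftarrow{\ i\ } Y \xrightarrow{\ \pi\ } X_G,
\]
and to analyze $\pi^*$ and $i^*$ separately. On the $\pi$ side, the action of $N_G(T)/T = W$ on $Y$ will yield an isomorphism $\pi^* : H^*(X_G,\QQ) \xrightarrow{\sim} H^*(Y,\QQ)^W$. On the $i$ side, the geometry of the complement $X_T \setminus Y$ will collapse the kernel of $i^*$, restricted to $W$-invariants, onto $\mathrm{Ann}(\omega)$. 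Composing these two identifications produces the statement.

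For the $\pi$ step, I would use that $G$ acts freely on $V^{ss}(\theta)$ by hypothesis, so $\pi$ is a locally trivial $G/T$-bundle. By Leray--Hirsch, combined with the classical Borel-type facts that $H^*(G/T,\QQ)$ is a free $\QQ$-module of rank $|W|$ with $W$-invariants equal to the scalars, one obtains an isomorphism $H^*(Y,\QQ) \cong H^*(X_G,\QQ) \otimes_\QQ H^*(G/T,\QQ)$ of $H^*(X_G,\QQ)$-modules equivariant for the fiberwise $W$-action. Taking $W$-invariants collapses the right-hand tensor factor to $\QQ$, giving $\pi^*$ as the claimed isomorphism.

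For the $i$ step, the key geometric claim is that the locus of $T$-semistable but $G$-unstable points in $V$ decomposes into Kirwan-style strata indexed by destabilising one-parameter subgroups lying in $T$; modulo $T$ these descend to divisors $D_\alpha \subset X_T$ satisfying $[D_\alpha] = c_1(L_\alpha)$ up to a nonzero scalar, for roots $\alpha$. The excision sequence then yields $\ker(i^*) = \sum_{\alpha \in R} c_1(L_\alpha) \cdot H^*(X_T,\QQ)$. Intersecting with the $W$-invariants, and using that $\omega$ is the (sign-normalised) product of $c_1(L_\alpha)$ over $\alpha \in R^+$ on which $W$ acts by the sign character, a Koszul-style antisymmetrisation argument shows that $\ker(i^*) \cap H^*(X_T,\QQ)^W$ is exactly $\{a \in H^*(X_T,\QQ)^W : a \cdot \omega = 0\}$.

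The main obstacle will be this last identification on $W$-invariants. Geometrically one must pin down the stratification of the $G$-unstable, $T$-semistable locus by root hyperplanes and track the divisor classes carefully; algebraically one must show that the antisymmetrising operator $\sum_{w \in W} \mathrm{sgn}(w) w$ carries the $W$-invariant part of $\sum_\alpha c_1(L_\alpha) \cdot H^*(X_T,\QQ)$ onto $\omega \cdot \mathrm{Ann}_{H^*(X_T,\QQ)^W}(\omega)$, so that the two characterisations of the kernel coincide. Once both pieces are in place, composing $i^*|_{W\text{-inv}}$ with $(\pi^*)^{-1}$ gives the desired surjection with the advertised kernel.
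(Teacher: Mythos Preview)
The paper does not prove this theorem: it is quoted from \cite{Martin2000,stromme} and used as a black box, so there is no ``paper's own proof'' to compare against. That said, your outline is worth commenting on, because the first half is right and the second half is not.

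Your use of the intermediate space $Y=V^{ss}(G)/T$ and the identification $\pi^*:H^*(X_G,\QQ)\xrightarrow{\sim}H^*(Y,\QQ)^W$ via Leray--Hirsch for the $G/T$-bundle is exactly Martin's opening move, and it is correct.

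The problem is your analysis of $i^*$. The claim that the $T$-semistable, $G$-unstable locus descends to a union of \emph{divisors} $D_\alpha\subset X_T$ with $[D_\alpha]=c_1(L_\alpha)$ is false in general. Already for the Grassmannian $\Gr(n,r)$ with $X_T=(\PP^{n-1})^r$, the complement $X_T\setminus Y$ is the locus where the $r$ points of $\PP^{n-1}$ are linearly dependent; for $r=2$ this is the diagonal, of codimension $n-1$, not $1$. So there is no excision sequence producing $\ker(i^*)=\sum_\alpha c_1(L_\alpha)\cdot H^*(X_T,\QQ)$, and the subsequent antisymmetrisation argument has nothing to stand on.

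Martin's actual route to the kernel is via the integration formula (stated in the paper as Theorem~\ref{thm:int}): one shows $\int_{X_G}\phi(a)\phi(b)=\int_{X_T}ab\,\omega^2$ for the surjection $\phi=(\pi^*)^{-1}\circ i^*$, and then nondegeneracy of the Poincar\'e pairing on $X_G$ together with the fact that $W$-anti-invariants in $H^*(X_T,\QQ)$ are exactly $\omega\cdot H^*(X_T,\QQ)^W$ identifies $\ker\phi$ with $\{a:a\omega=0\}$. The geometry underlying the integration formula is that $i^*(\omega)$ is (up to a scalar) the Euler class of the vertical tangent bundle of $\pi$, not that $\omega$ records divisorial strata in $X_T\setminus Y$. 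If you want to repair your sketch, replace the excision/divisor step by this integration argument.
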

This completely describes $H^*(X_G,\QQ)$ as an algebra.

Martin also gives a theorem for computing integrals on $X_G$. Let $\alpha \in H^*(X_G)$ be a cohomology class, and define a \emph{lift} of $\alpha$, denoted $\tilde{\alpha}$, as any class such that $i^*(\tilde{\alpha})=\pi^*(\alpha)$. Lifts are not unique, unless $\alpha \in H^2(X_G)$ and the unstable locus has codimension at least two. 
\begin{thm}\cite{Martin2000}\label{thm:int} Let $\alpha\in H^*(X_G)$. Then 
\[\int_{X_G} \alpha = \int_{X_T} \tilde{\alpha} \cup \omega^2.\]
\end{thm}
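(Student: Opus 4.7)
My plan is to recognize the integration formula as the Weyl integration formula from equivariant cohomology, applied to the free $G$-action on $V^{ss}(G)$. The key algebraic reformulation is the identity
$$\omega^2 = \frac{(-1)^{|R^+|}}{|W|}\prod_{\alpha\in R^+}c_1(L_\alpha)^2 = \frac{1}{|W|}\prod_{\alpha\in R}c_1(L_\alpha),$$
which follows from $c_1(L_{-\alpha}) = -c_1(L_\alpha)$ by pairing each positive root with its negative and using that $|R|=2|R^+|$ is even. The right-hand side is the equivariant Euler class of $\mathfrak{g}/\mathfrak{t}$ as a $T$-representation, divided by $|W|$; this is exactly the Jacobian that appears in the Weyl integration formula relating $G$- and $T$-equivariant integrals.

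Concretely, I would factor through the intermediate space $\tilde X := V^{ss}(G)/T$ using the diagram $X_T \xleftarrow{i} \tilde X \xrightarrow{\pi} X_G$. Since $X_T\setminus \tilde X$ has positive codimension (arising from $V^{ss}(T)\setminus V^{ss}(G)$), integration of a top-degree class on $X_T$ agrees with the integral over $\tilde X$ of its restriction. Combined with the defining property $i^*(\tilde\alpha) = \pi^*(\alpha)$ of a lift, this gives
$$\int_{X_T}\tilde\alpha \cup \omega^2 \;=\; \int_{\tilde X}\pi^*(\alpha)\cup i^*(\omega^2).$$
Via the identifications $H^*(\tilde X) = H^*_T(V^{ss}(G))$ and $H^*(X_G) = H^*_G(V^{ss}(G))$ coming from the freeness of the actions, the right-hand side is the $T$-equivariant integral of $\alpha\cdot\omega^2$ on $V^{ss}(G)$, while $\int_{X_G}\alpha$ is the corresponding $G$-equivariant integral. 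The equivariant Weyl integration formula, with $\omega^2$ in the role of the Jacobian computed above, delivers the desired equality.

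The chief obstacle is the technical framework: one must verify the equivariant Weyl integration formula in the needed generality (free actions on open subsets of a vector space) and handle the positive-codimension complement $X_T\setminus \tilde X$ with care. A more explicit alternative is to prove $\pi_*(i^*\omega^2) = 1 \in H^0(X_G,\QQ)$ directly by fiber integration, after compactifying $\pi$ to a $G/B$-bundle over $X_G$ and invoking the Borel presentation of $H^*(G/B)$ together with the Weyl denominator identity $\int_{G/B}\prod_{\alpha\in R^+}c_1(L_\alpha) = \pm|W|$; after the projection formula is applied, this reduces the theorem to $\int_{X_G}\alpha\cup\pi_*(i^*\omega^2) = \int_{X_G}\alpha$. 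Either route rests ultimately on the same Weyl-theoretic identity, and the sign bookkeeping (the reason $\omega^2$, rather than $\omega$, is the natural object) is the most delicate point.
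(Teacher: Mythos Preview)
The paper does not supply its own proof of this statement: Theorem~\ref{thm:int} is quoted directly from Martin \cite{Martin2000} as a black box, with no argument given. So there is nothing in the paper to compare your proposal against.

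That said, your outline is essentially the standard proof (Martin's, in its algebraic-geometric form): factor through $\tilde X = V^{ss}(G)/T$, use that $\pi:\tilde X\to X_G$ is a $G/T$-bundle, and identify $\omega^2$ with the fiberwise class whose push-forward is $1$ via the Weyl denominator identity. Two small cautions. First, $G/T$ is not compact, so ``compactifying $\pi$ to a $G/B$-bundle'' is not quite the right phrasing; rather, one factors $\tilde X \to V^{ss}(G)/B \to X_G$, where the first map is an affine-space bundle (fiber $B/T\cong U$) and the second is the genuine $G/B$-bundle on which fiber integration is performed. Second, the step passing from $\int_{X_T}$ to $\int_{\tilde X}$ needs more than ``positive codimension'': one uses either a compactly supported or Borel--Moore argument, or (as Martin does symplectically) equivariant methods, since $\tilde X$ is open in $X_T$ and top-degree integrals are sensitive to the boundary. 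These are exactly the ``technical framework'' issues you flag, and they are handled in \cite{Martin2000}; your sketch is otherwise sound.
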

\subsection{Martin's description of the cohomology of the Grassmannian}
We give this description for the Grassmannian in detail, presenting a simple isomorphism between this description and the usual one coming from Schubert calculus.

The Grassmannian $\Gr(n,r)$ of $r$-dimension quotients \footnote{As we consider the Grassmannian of quotients, our conventions are transpose to some of the literature.} of $\CC^n$ can be constructed as the GIT quotient of $V:=\Mat(r\times n)$ by $G:=\GL(r)$. The group $G$ acts by matrix multiplication on the right, and the stability condition is $\theta(g)=\det(g), g \in \GL(r)$. Let $T$ be the diagonal torus of $G$. The Weyl group is the symmetric group on $r$ elements.  It is easy to see that the abelianisation is $X_T:=(\PP^{n-1})^{\times r}$. The cohomology ring of $X_T$ is 
\[\CC[x_1,\dots,x_r]/(x_1^n,\dots,x_r^n),\]
where $x_i$ is the hyperplane class of the $i^{th}$ factor of $\PP^{n-1}$. The Weyl group acts by permuting the $x_i$. In this notation, $\omega:=\prod_{i<j} x_i-x_j.$ Then Martin's theorem states that $H^*(\Gr(n,r),\CC)$ is isomorphic to the quotient of the symmetric ring of polynomials in $x_1,\dots, x_r$ by the ideal $I$ where $a \in I$ if
\begin{equation}\label{eqn:Iideal} a \omega \in (x_1^n,\dots,x_r^n).\end{equation}

There are several other descriptions of the cohomology of the Grassmannian, we give one of these, as it is used to describe Schubert calculus and quantum Schubert calculus. Let $\lambda=(\lambda_1,\dots,\lambda_k)$ be a partition (that is, a decreasing sequence of non-negative integers). This describes a Young tableau, where the $i^{th}$ row has $\lambda_i$ boxes. For example, the partition $(2,1,1)$ corresponds to 
\[\yng(2,1,1).\]
We denote the tableau $\lambda$ as well. The Schur polynomial $s_\lambda$ in $r$ variables associated to $\lambda$ can be described in either of two ways. First, a semi-standard Young tableau (SSYT) is a filling of each box in the tableau with a number from $1,\dots, r$ such that rows are increasing and columns are strictly increasing. For each SSYT  $A$ of $\lambda$, let $m_A$ be the monomial in $x_1,\dots,x_k$ whose degree in $x_i$ is the number of times $i$ appears in $A$. Then $s_\lambda:=\sum_{A} m_A.$ If the length of $\lambda$ is more than $r$, $s_\lambda$ is 0.

 One can also describe $s_\lambda$ as follows. We can assume it has length exactly $r$ by adding zeroes.  Then define $s_\lambda$ as
\[s_\lambda \prod_{i<j}(x_i-x_j)= \det
\begin{bmatrix}
x_1^{r-1+\lambda_1}& \cdots & x_r^{r-1+\lambda_1} \\
\vdots & & \vdots \\
x_1^{1+\lambda_r}& \cdots & x_r^{1+\lambda_r} \\
x_1^{\lambda_r}& \cdots & x_r^{\lambda_r} \\
\end{bmatrix} \]
These two descriptions coincide.  The polynomial $s_\lambda$ is symmetric, and the set of all Schur polynomials generate the symmetric algebra on $r$ variables. The cohomology ring of the Grassmannian can be identified with the quotient of $\QQ[x_1,\dots,x_n]^W$ by the ideal $J$ generated by all Schur polynomials where the corresponding Young tableau has either width greater than $n-r$ or height greater than $r$ (that is, it does not fit into a $r \times n-r$ box). 

We now give a simple description of an isomorphism between these two descriptions of the cohomology of the Grassmannian (of course, that they are isomorphic is the content of Martin's theorem): this was pointed out to us by Lauren Williams.
\begin{lem}\label{lem:grassmannian} Let $I$ be the ideal from Equation \eqref{eqn:Iideal} and $J$ be as above. Then $I=J$.
\end{lem}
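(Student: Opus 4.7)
The plan is to establish the equality by proving $J \subseteq I$ directly from the determinantal formula and then invoking a dimension count to upgrade the inclusion to an equality.

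For $J \subseteq I$, I would take an arbitrary generator $s_\lambda$ of $J$, that is, a Schur polynomial indexed by a partition $\lambda$ that does not fit into an $r \times (n-r)$ box. If $\lambda$ has length greater than $r$ then $s_\lambda = 0$ and there is nothing to check, so I may assume $\lambda$ has length at most $r$ but width $\lambda_1 \geq n-r+1$. Applying the determinantal formula displayed just above the lemma, $s_\lambda \omega$ is a signed $r \times r$ determinant whose top row consists of the entries $x_j^{r-1+\lambda_1}$ for $j=1,\dots,r$. Since $r-1+\lambda_1 \geq n$, every entry in the top row is divisible by $x_j^n$, hence by expanding along that row one sees that $s_\lambda \omega \in (x_1^n,\dots,x_r^n)$. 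Therefore $s_\lambda \in I$, and since the $s_\lambda$ with $\lambda$ not fitting in the box generate $J$, this gives $J \subseteq I$.

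For the reverse inclusion $I \subseteq J$, I would argue by comparing the dimensions of the quotient rings rather than working element by element. On the one hand, Theorem \ref{thm:martin} applied to $\Gr(n,r) = \Mat(r \times n)//_\theta \GL(r)$ identifies $\CC[x_1,\dots,x_r]^W / I$ with $H^*(\Gr(n,r),\CC)$, which is a vector space of dimension $\binom{n}{r}$. On the other hand, it is standard from Schubert calculus that the Schur polynomials $s_\lambda$ with $\lambda$ fitting in the $r \times (n-r)$ rectangle project to a $\CC$-basis of $\CC[x_1,\dots,x_r]^W / J$, so that quotient also has dimension $\binom{n}{r}$. Combining this with the inclusion $J \subseteq I$ already proved forces $J = I$.

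The main obstacle, such as it is, lies in the first step: one has to recognise that the determinantal presentation makes the ``width too large'' condition on $\lambda$ manifestly match the ideal $(x_1^n, \dots, x_r^n)$, after which everything else is bookkeeping. The second step is essentially free, since the two dimension computations are classical inputs (Martin's theorem plus the Schubert basis). An alternative one could consider is a direct element-wise argument by expanding $a \in I$ in the Schur basis and showing all ``in box'' Schurs have vanishing coefficient, but the dimension-count route is cleaner and reuses Theorem \ref{thm:martin} precisely as it is stated.
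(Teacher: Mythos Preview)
Your argument is correct. The direction $J\subseteq I$ is essentially the paper's: the paper chooses to work with the one-row generators $s_{(k)}$, $k=n-r+1,\dots,n$, while you treat all too-wide $s_\lambda$ at once, but the mechanism (the top row of the determinant has exponent $r-1+\lambda_1\geq n$) is identical.

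For $I\subseteq J$ you take a genuinely different route. The paper argues element-wise: write $f\in I$ as $g+f'$ with $g\in J$ and $f'$ a combination of in-box Schurs, observe that in each determinant $\omega s_\lambda$ every monomial has $x_i$-degree at most $r-1+\lambda_1<n$, so $\omega f'\in(x_1^n,\dots,x_r^n)$ forces $\omega f'=0$ and hence $f'=0$. Your approach instead invokes Martin's theorem and the Schubert basis to pin both quotient dimensions at $\binom{n}{r}$, then concludes from $J\subseteq I$. This is valid and short, and the paper itself remarks just before the lemma that the abstract isomorphism is already the content of Martin's theorem, so you are not being circular. What the paper's direct argument buys is that it is self-contained and makes the equality visible at the level of polynomials, without appealing to either cohomology computation; this is also the germ of the rim-hook manipulation used later (Proposition~\ref{prop:hookrim}). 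Your dimension count is cleaner but treats the two descriptions as black boxes.
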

\begin{proof}
We first show that $J \subset I$. It is well-known that $J$ is in fact generated by $s_\lambda$ where $\lambda=(k)$, $k=n-r+1,\dots,n$, so it suffices to show that $s_{(k)} \in I$. By the second description of Schur polynomials, 
\[\omega s_{(k)}= \det \begin{bmatrix}
x_1^{r-1+k}& \cdots & x_r^{r-1+k} \\
\vdots & & \vdots \\
x_1& \cdots & x_r\\
1 & \cdots & 1 \\
\end{bmatrix} .\]
As  $k>n-r+1$, each monomial in the first row has degree at least $n$. Each monomial in the expansion of the determinant is thus a multiple of $x_i^n$ for some $i$, and so $s_{(k)} \in I$.

In the reverse direction, let $f$ be a symmetric polynomial in $I$. It can be written as a sum of Schur polynomials, and by assumption $\omega f \in (x_i^n \mid i=1,\dots,r)$. Note that we can write $f=g+ f'$ where $g$ is some element of $J$ and $f'=\sum_{\lambda} s_{\lambda}$, where all $\lambda$ appearing in the sum fit into a $r \times n-r$ box.  Note that $f' \in I$. Suppose $f' \neq 0$. Then writing $\omega f'=\sum_{\lambda}\omega s_{\lambda}$ as a sum of determinants, each $\lambda$ contributes a term of the form 
\[\begin{bmatrix}
x_1^{r-1+\lambda_1}& \cdots & x_r^{r-1+\lambda_1} \\
\vdots & & \vdots \\
x_1^{1+\lambda_{r-1}}& \cdots & x_r^{1+\lambda_{r-1}} \\
x_1^{\lambda_r}& \cdots & x_r^{\lambda_r} \\
\end{bmatrix}. \]
Note that the degree in $x_i$ of any monomial in the determinant is at most $r-1+\lambda_1<n$, by assumptions on $\lambda$. As $\omega f' \in (x_i^n \mid i=1,\dots, r)$, this implies $f'=0.$
\end{proof}
\section{Cohomology of quiver flag varieties}
We briefly recall the construction of quiver flag varieties.  Quiver flag varieties are generalizations of Grassmannians and type A flag varieties, introduced by Craw \cite{Craw2011}.  Like flag varieties, they are GIT quotients satisfying the assumptions above: this implies (via \cite{King1994}) that they are fine moduli spaces.  A quiver flag variety $M_\theta(Q,\br)$ is determined by a quiver $Q$ and a dimension vector $\br$.  The quiver $Q$ is always assumed to be finite and acyclic, with a unique source.  Let $Q_0 = \{0,1,\ldots,\rho\}$ denote the set of vertices of $Q$ and let $Q_1$ denote the set of arrows.  Without loss of generality, after reordering the vertices if necessary, we may assume that $0 \in Q_0$ is the unique source and that the number $n_{ij}$ of arrows from vertex $i$ to vertex $j$ is zero unless $i<j$.  Write $s$,~$t :  Q_1 \to Q_0$ for the source and target maps, so that an arrow $a \in Q_1$ goes from $s(a)$ to $t(a)$.  The dimension vector $\br = (r_0,\ldots,r_\rho)$ lies in $\NN^{\rho+1}$, and we insist that $r_0 = 1$. $M_\theta(Q,\br)$ is defined to be the moduli space of $\theta$-stable representations of  the quiver $Q$ with dimension vector $\br$. Here $\theta$ is a fixed stability condition defined below. 

Consider the vector space
\[
\Rep(Q,\br) =\bigoplus_{a \in Q_1}\Hom(\CC^{r_{s(a)}},\CC^{r_{t(a)}})
\]
and the action of $\GL(\mathbf{r}):=\prod_{i=0}^\rho \GL(r_i)$ on $\Rep(Q,\br)$ by change of basis.  The diagonal copy of $\GL(1)$ in $\GL(\br)$ acts trivially, but the quotient $G := \GL(\br)/\GL(1)$ acts effectively; since $r_0 = 1$, we may identify $G$ with $\prod_{i=1}^\rho \GL(r_i)$.  
\begin{mydef} \label{defn:quiverflagvariety} The quiver flag variety $M_\theta(Q,\br)$ is the GIT quotient $\Rep(Q,\br)/\!\!/_\theta\, G$, where the stability condition $\theta$ is the character of $G$ given by 
\begin{align*}
  \theta(g) = \prod_{i=1}^\rho \det(g_i), && g = (g_1,\ldots,g_\rho) \in \prod_{i=1}^\rho \GL(r_i).
\end{align*}
\end{mydef}
 For the stability condition $\theta$, all semistable points are stable. 
 
Quiver flag varieties are fine moduli spaces and hence carry universal bundles: let $W_i$ denote the $i^{th}$ universal bundle; it is a vector bundle of rank $r_i$. This is the same vector bundle as described in the introduction, via constructing $M_\theta(Q,\br)$ as a tower of Grassmannian bundles. 
\begin{eg}Consider the quiver flag variety associated to the quiver
\begin{center}
  \includegraphics[scale=0.5]{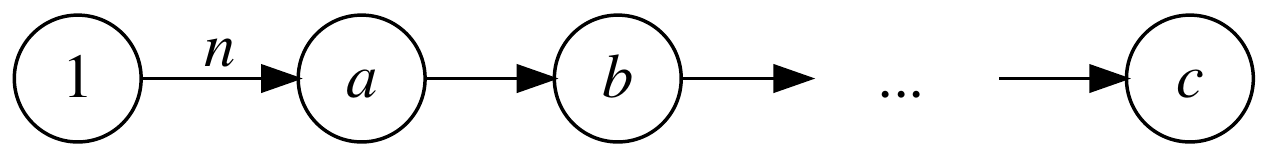}.
\end{center}
Here the label inside a vertex is its dimension, and labels above arrows indicate multiplicity (which is assumed to be 1 if not labelled). The associated quiver flag variety is the flag variety of quotients $Fl(n;a,b,\dots,c)$ in $\CC^n$. 
\end{eg}
 
 Different quivers can represent isomorphic quiver flag varieties. For example, via the construction as a tower Grassmann bundles if $s_i=r_i$ (recall that $s_i=\rank(\oplus_{a\in Q_1, t(a)=i} W_{s(a)})$) then $M_i=M_{i-1}$, and so we can represent $M_\theta(Q,\br)$ as a quiver flag variety with one less vertex. Therefore, we always assume that $s_i-r_i>0$. The paper \cite{kalashnikov} explores other equivalences: one equivalence is given by \emph{grafting}, where one takes any subquiver connected to the whole quiver only by some dimension 1 vertex, and moves the sub-quiver to the source. A quiver which has no graftable sub-quivers is called \emph{graph-reduced}. In \cite[Corollary 2.4.16]{thesis}, it's shown that any graph-reduced quiver flag variety has ample cone the positive orthant. Therefore, any quiver flag variety is equivalent to a quiver flag variety with ample cone the positive orthant, and for simplicity, we assume that all quiver flag varieties are graph reduced. We can now assume that a quiver flag variety is ample if and only if $s_i-s_i'>0$ for all $i$, where 
 \begin{equation} \label{eqn:si} s_i:=\sum_{a\in Q_1, t(a)=i} r_{s(a)}, \quad s_i'=\sum_{a\in Q_1, t(a)=i} r_{s(a)}.\end{equation}

If the dimension vector $\br=(1,\dots,1)$, then $M_\theta(Q,\br)$ is a toric variety; we call such quiver flag varieties \emph{toric quiver flag varieties}. These are smooth projective toric varieties: they are GIT quotients of $\CC^{Q_1}$ by $K\cong (\CC^*)^{\rho}$, where $K$ acts by weight $D_a \in \chi(K)$, where $D_a=-e_{s(a)}+e_{t(a)}$, where $e_i$ is the standard basis element. We set $e_0=0$. The weights give the projection in the following exact sequence: the first map is just the kernel:
\[0 \to \ZZ^{|Q_1|-\rho} \to \ZZ^{|Q_1|} \to \ZZ^{\rho} \to 0.\]
The cohomology of a toric variety can be described via this exact sequence (see, for example, \cite{audin}). In the case of a toric quiver flag variety, we re-write this in terms of the quiver.
\begin{prop} Let $M_\theta(Q,r)$ be a toric quiver flag variety. Then 
\[H^*(M_\theta(Q,\br),\QQ) \cong \QQ[x_1,\dots,x_\rho]/I,\]
where $I$ is generated by the following elements, for each $i \in \{1,\dots,\rho\}$:
\[\prod_{a \in Q_1, t(a)=i} (-x_{s(a)}+x_{t(a)}). \label{eqn:abrelation}\]
Here we set $x_0=0$. 
\end{prop}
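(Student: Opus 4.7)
The plan is to invoke the standard Cox-style presentation of the cohomology of a smooth projective toric variety arising as a GIT quotient, applied to the exact sequence already given. First I would identify $\Pic(M_\theta(Q,\br)) \otimes \QQ$ with $\chi(K) \otimes \QQ \cong \QQ^\rho$, taking $x_1,\ldots,x_\rho$ to be the basis dual to the standard coordinates on $K = (\CC^*)^\rho$. Under this identification the toric divisor $D_a$ associated to the coordinate $z_a$ for $a \in Q_1$ has class $[D_a] = -x_{s(a)} + x_{t(a)}$ (with $x_0 = 0$); this is just a repackaging of the weight data $D_a = -e_{s(a)} + e_{t(a)}$.

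Next I would invoke the standard toric cohomology result (e.g.\ the version in Audin cited above): for a smooth projective toric GIT quotient $\CC^N/\!/_\theta K$ one has
\[
H^*(X,\QQ) \cong \Sym(\chi(K) \otimes \QQ)/\mathrm{SR},
\]
where $\mathrm{SR}$ is the Stanley--Reisner ideal generated by products $\prod_{a \in T}[D_a]$ over subsets $T \subseteq Q_1$ whose coordinate subspace $\bigcap_{a \in T}\{z_a = 0\}$ lies entirely in the unstable locus $V \setminus V^{ss}(\theta)$. Equivalently, one only needs generators corresponding to the minimal such $T$, i.e.\ the primitive collections.

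The main remaining step, and the one requiring actual input from the quiver structure, is to show that the primitive collections here are precisely the fibers of the target map, $S_i := \{a \in Q_1 : t(a) = i\}$, one for each $i \in \{1,\ldots,\rho\}$. Using the tower-of-Grassmannians description with $r_i = 1$, $\theta$-stability of a representation is equivalent to requiring, at each vertex $i$, that the map $\bigoplus_{a: t(a)=i} \CC \to \CC$ be nonzero, i.e.\ that not all $z_a$ with $t(a)=i$ vanish. Hence $\bigcap_{a \in S_i}\{z_a=0\}$ is unstable, and conversely any unstable point must have all arrows into some vertex equal to zero, placing it inside $\bigcap_{a \in S_i}\{z_a=0\}$ for that $i$. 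Minimality of each $S_i$ follows because removing any single arrow produces a point satisfying the stability criterion at every vertex. Combining this with the previous two steps produces exactly the stated generators $\prod_{a\in Q_1,\, t(a)=i}(-x_{s(a)}+x_{t(a)})$ of the defining ideal, and completes the proof. The only step with any real content is the identification of the primitive collections; everything else is a mechanical transcription of the toric formula in the language of the quiver.
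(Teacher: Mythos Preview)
Your argument is correct and follows essentially the same route as the paper: both reduce to the standard Cox/Stanley--Reisner presentation for a smooth projective toric GIT quotient, and both identify the relevant combinatorial data from the quiver. The only cosmetic difference is the viewpoint on that identification: you locate the primitive collections $S_i=\{a:t(a)=i\}$ directly by describing the unstable locus via the stability criterion (all arrows into some vertex vanish), whereas the paper phrases it dually, identifying the maximal subsets $S\subset Q_1$ whose cone of weights $\{D_a:a\in S\}$ fails to contain $\theta$ as $\{a:t(a)\neq i\}$. These two descriptions are complements of one another and the translation between them is immediate, so the approaches are the same.
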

\begin{proof}
This is a straightforward application of the usual description of the cohomology of a toric variety, translated into GIT data (see for example \cite{crepantconjecture}), together with the observation that the maximal subsets $S \subset Q_1$ such that $\theta$ is \emph{not} contained in the cone over the $D_a, a \in S$, are precisely the collections, for each $i=1,\dots\rho$
\[\{a \in Q_1 \mid t(a) \neq i\}.\]
\end{proof}
We can now use this description to describe the cohomology of a general quiver flag variety. It is shown in \cite{kalashnikov} that the abelianisation of a quiver flag variety is in fact a toric quiver flag variety: it is the toric quiver flag variety associated to the \emph{abelianised quiver}. The abelianised quiver $Q^{ab}$ has  $r_i$ vertices for every vertex $i$ in $Q$; the number of arrows between a vertex coming from $i \in Q_0$ and a vertex from $j \in Q_0$ is $n_{ij}$. For example, the quiver
\begin{center}
\includegraphics[scale=0.5]{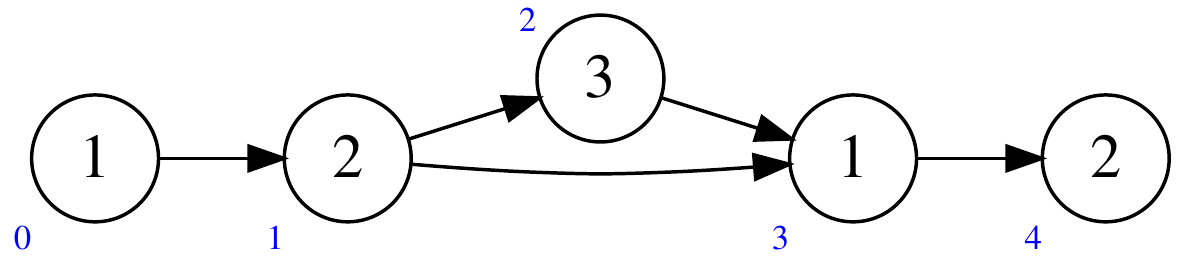}
\end{center}
has abelianisation
\begin{center}
\includegraphics[scale=0.5]{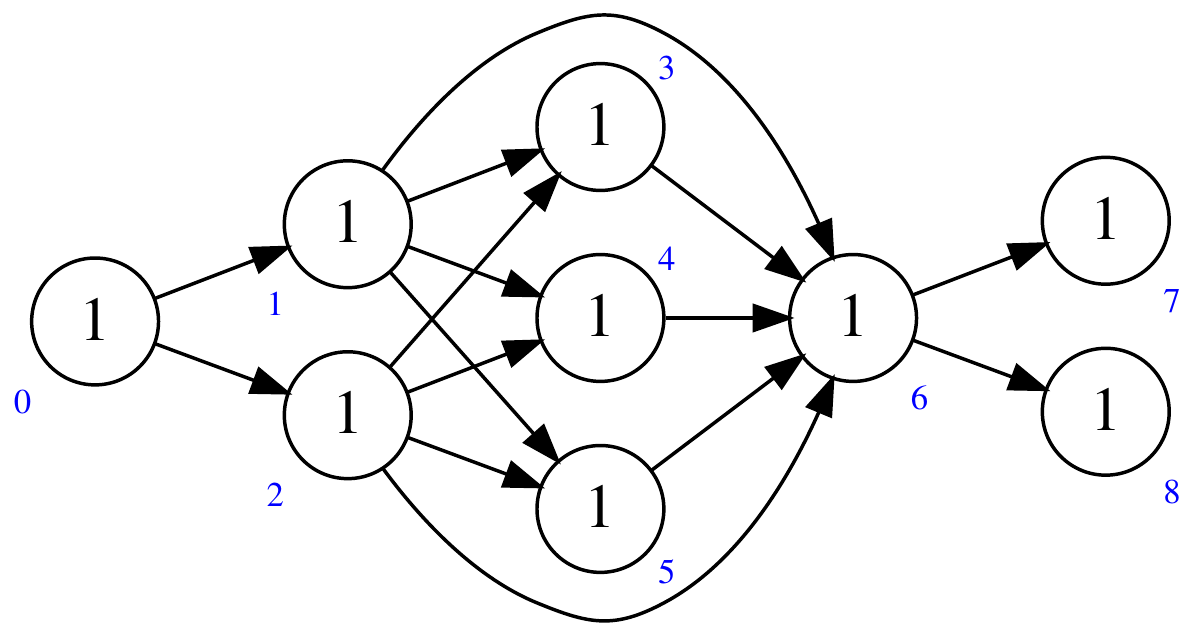}.
\end{center}
Then Martin's theorem immediately implies that
\begin{prop} Let $M_\theta(Q,r)$ be a quiver flag variety. Let $W=\prod_{i=1}^\rho S_{r_i}$ be the product of symmetric groups. Then 
\[H^*(M_\theta(Q,\br),\QQ) \cong \QQ[x_{11},\dots,x_{1 r_i},\dots, x_{\rho 1},\dots,x_{\rho r_\rho}]^W/(f : \omega f \in I^{ab}),\]
where $I^{ab}$ is generated by the following elements, for each $i=1,\dots,\rho$, and $j=1,\dots,r_i$:
\begin{equation}\prod_{a \in Q_1, t(a)=i}\prod_{k=1}^{r_{s(a)}} (-x_{s(a)k}+x_{t(a)j}).\label{eqn:Iab}\end{equation}
Here we set $x_{01}=0$, and $\omega=\prod_{i=1}^\rho \prod_{1\leq j<k \leq r_i} (x_{ij}-x_{ik})$.
\end{prop}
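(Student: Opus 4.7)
The plan is a direct application of Martin's theorem (Theorem~\ref{thm:martin}) to the GIT presentation of $M_\theta(Q,\br)$, using the preceding proposition on toric quiver flag varieties to describe the cohomology of the abelianisation.

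First I would invoke the identification (referenced above from \cite{kalashnikov}) of the abelianisation $X_T = \Rep(Q,\br)//_\theta T$ with the toric quiver flag variety $M_\theta(Q^{ab},\br)$ of the abelianised quiver: splitting each $\CC^{r_i}$ into coordinate lines decomposes $\Hom(\CC^{r_{s(a)}},\CC^{r_{t(a)}})$ as a direct sum of one-dimensional pieces indexed by the arrows of $Q^{ab}$, and the restriction of $\theta$ to $T$ becomes the corresponding toric stability condition. The Weyl group of $G = \prod_i \GL(r_i)$ is $W = \prod_i S_{r_i}$, acting by permuting $x_{i1},\dots,x_{ir_i}$ within each block. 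The preceding proposition then gives $H^*(X_T,\QQ) \cong \QQ[x_{ij}]/I^{ab}$ with $I^{ab}$ exactly as in the statement, and one checks immediately that $I^{ab}$ is $W$-stable (the Weyl group permutes the $r_i$ generators indexed by $j=1,\dots,r_i$ at vertex $i$ among themselves). In characteristic zero this yields $H^*(X_T,\QQ)^W \cong \QQ[x_{ij}]^W/(I^{ab} \cap \QQ[x_{ij}]^W)$.

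Next I would compute $\omega$ as prescribed by Theorem~\ref{thm:martin}. The positive roots of $G$ are the block-diagonal roots $e_{ij}-e_{ik}$ for $j<k$ within each $\GL(r_i)$ factor, and the line bundle $L_\alpha$ attached to $\alpha = e_{ij}-e_{ik}$ on $X_T$ has first Chern class a scalar multiple of $x_{ij}-x_{ik}$. Up to an overall nonzero constant, which is irrelevant for defining an ideal, one therefore has $\omega = \prod_{i=1}^\rho \prod_{j<k}(x_{ij}-x_{ik})$. Substituting into Theorem~\ref{thm:martin}, $H^*(M_\theta(Q,\br),\QQ)$ is the quotient of $H^*(X_T,\QQ)^W$ by the ideal of classes annihilated by $\omega$. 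Lifting such a class to a symmetric polynomial $f$, the annihilation condition reads $\omega f \in I^{ab}$; this condition automatically contains $I^{ab}\cap\QQ[x_{ij}]^W$, so the final answer collapses to $\QQ[x_{ij}]^W/(f : \omega f \in I^{ab})$, matching the statement.

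The only mildly delicate step is the identification of the abelianisation with $M_\theta(Q^{ab},\br)$, which requires care with the stability condition and with the bookkeeping of the GIT data; once that is in place, the rest is substitution of the preceding proposition into Martin's theorem. The expression for $\omega$ is standard once the roots of $G$ have been written in terms of the $x_{ij}$, and the translation of the annihilator ideal to the polynomial-level condition $\omega f \in I^{ab}$ is formal.
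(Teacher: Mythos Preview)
Your proposal is correct and matches the paper's approach exactly: the paper simply states that ``Martin's theorem immediately implies'' this proposition, relying on the preceding identification of the abelianisation with $M_\theta(Q^{ab},\br)$ and the description of its cohomology. You have spelled out the details that the paper leaves implicit, and every step is sound.
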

The $x_{ij}$ are Chern roots of the $i^{th}$ tautological bundle on $M_\theta(Q,\br)$. 

As stated, it is hard to work computationally with this proposition. However, using the same idea as in Lemma \ref{lem:grassmannian}, we can describe a vector space basis and give an effective algorithm for computing products of elements of the basis. 
\begin{rem}In the case of a non-Grassmannian flag variety, this basis is far from being the Schubert basis. However, it is the natural basis to consider from the perspective of the Abelian/non-Abelian correspondence.
\end{rem}
We first describe the generators. Given a quiver flag variety $M_\theta(Q,\br)$, let 
\[s_i:=\sum_{a \in Q_1, t(a)=i}r_{s(a)}.\]
Notice that the degree of the relation \eqref{eqn:abrelation} is of degree $s_i$.  

We first describe the candidate generators. For a partition $\lambda$,  the image of the Schur polynomial associated to $\lambda$ in the variables $x_{i1},\dots, x_{i r_i}$ in $H^*(M\theta(Q,\br))$ is denoted $s^i_\lambda$ .  For example, if $\lambda$ is the length $k$ partition $(1,\dots,1)$, $s^i_\lambda=c_k(W_i)$, and $s^i_{\emptyset}=1$. Set $s^0_\lambda=0$ for any partition $\lambda \neq \emptyset$. Motivated by the Grassmann bundle structure of $M_\theta(Q,\br)$, consider all $s^i_\lambda$ such that $\lambda$ fits into an $r_i \times (s_i-r_i)$ box. 
\begin{thm} \label{thm:cohomology} The $s^i_\lambda$ generate $H^*(M_\theta(Q,\br))$ as an algebra.
\end{thm}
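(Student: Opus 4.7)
The plan is to induct on $\rho$ using Craw's realization of $M_\theta(Q,\br)$ as an iterated Grassmann bundle
\[
M_\rho \to M_{\rho-1} \to \cdots \to M_1 \to M_0 = \mathrm{pt},
\]
where $M_i = \Gr(E_i, r_i)$ with $E_i = \bigoplus_{a \in Q_1,\, t(a)=i} W_{s(a)}$ (a bundle of rank $s_i$ on $M_{i-1}$), and $W_i$ denotes the tautological rank-$r_i$ quotient bundle.

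The key input I would invoke is the classical Grassmann bundle cohomology theorem: for any rank-$N$ vector bundle $E$ on a base $B$ and any $0 < r < N$, $H^*(\Gr(E,r))$ is a free $H^*(B)$-module with basis $\{s_\lambda(W)\}$, where $\lambda$ ranges over partitions fitting in an $r \times (N-r)$ box and $W$ is the universal rank-$r$ quotient bundle on $\Gr(E,r)$. In particular, $H^*(\Gr(E,r))$ is generated as an $H^*(B)$-algebra by the Chern classes $c_1(W), \ldots, c_r(W)$.

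I would then apply this at each stage of the tower. The crucial observation is that $c_k(W_i) = s^i_{(1^k)}$, and the column partition $(1^k)$ fits into the $r_i \times (s_i - r_i)$ box for every $1 \leq k \leq r_i$ (using the standing assumption $s_i > r_i$). Hence the Chern classes of $W_i$ are among the $s^i_\lambda$ listed in the statement. Now I induct: assuming $H^*(M_{i-1})$ is generated as a $\QQ$-algebra by the $s^j_\lambda$ with $j \leq i-1$ and $\lambda$ in its box, the Grassmann bundle result shows $H^*(M_i)$ is generated as a $\QQ$-algebra by these pullbacks together with $c_1(W_i), \ldots, c_{r_i}(W_i)$, and hence by the enlarged collection of $s^j_\lambda$ with $j \leq i$. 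Iterating to $i = \rho$ gives the theorem.

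There is essentially no main obstacle: the statement follows directly from Grassmann bundle cohomology once one notes that Chern classes are Schur polynomials for column partitions. The genuinely interesting content—that $\mathcal{S}(Q)$ is a $\QQ$-vector space basis and that Schur polynomials indexed by partitions \emph{outside} the $r_i \times (s_i - r_i)$ box can be rewritten via rim-hook removals—is deferred to Proposition \ref{prop:hookrim}, and there the real work (translating the abelian-side relations \eqref{eqn:Iab} through $\omega$ in the style of Lemma \ref{lem:grassmannian}) takes place.
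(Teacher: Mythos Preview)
Your proof is correct, but it takes a genuinely different route from the paper's. You invoke the classical Grassmann bundle cohomology theorem (Leray--Hirsch type) directly on Craw's tower, which yields algebra generation by the Chern classes $c_k(W_i)=s^i_{(1^k)}$ at each stage; since these column partitions lie in the $r_i\times(s_i-r_i)$ box, you are done. The paper instead delays the proof of Theorem~\ref{thm:cohomology} until \emph{after} establishing the rim-hook rule (Proposition~\ref{prop:hookrim}), and then argues: the unrestricted $s^i_\lambda$ obviously generate, too-long partitions give zero, and too-wide ones are reduced to strictly thinner ones via the rim-hook formula, so one iterates. Your approach is shorter and more elementary for the bare generation statement, and actually proves the stronger fact that the Chern classes alone suffice. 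The paper's approach, on the other hand, stays entirely within the Abelian/non-Abelian framework and is constructive: the reduction step \emph{is} the algorithm for expressing any $s^i_\lambda$ in the basis $\mathcal{S}(Q)$, which is the computational payoff the paper is after. One small correction to your final paragraph: in the paper's logical order, Proposition~\ref{prop:hookrim} is proved \emph{before} Theorem~\ref{thm:cohomology} and is the main input to it, not the other way around.
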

We delay the proof of this theorem momentarily, and note that it implies
\begin{cor} The cohomology of $M_\theta(Q,\br)$ has a vector space basis given by products
\[s^1_{\lambda_1} \cdots s^\rho_{\lambda_\rho}\]
where $\lambda_i$ varies through all partitions fitting inside an $r_i \times (s_i-r_i)$ box. 
\end{cor}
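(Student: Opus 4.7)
The corollary follows from Theorem \ref{thm:cohomology} by establishing (a) that the claimed products span $H^*(M_\theta(Q,\br))$ and (b) that their cardinality equals $\dim_\CC H^*(M_\theta(Q,\br))$, forcing linear independence.

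For (a), Theorem \ref{thm:cohomology} guarantees that every cohomology class is a polynomial in the generators $s^i_\lambda$ with $\lambda \subseteq r_i \times (s_i-r_i)$, so it suffices to rewrite an arbitrary monomial in these generators as a $\CC$-linear combination of single-factor products $s^1_{\lambda_1} \cdots s^\rho_{\lambda_\rho}$. The only subtle step is collapsing a product $s^i_\lambda \cdot s^i_\mu$ attached to the same vertex. I would expand this via the Littlewood--Richardson rule as $\sum_\nu c^\nu_{\lambda\mu}\, s^i_\nu$; terms with $\nu$ having more than $r_i$ parts vanish as Schur polynomials in $r_i$ variables, terms with $\nu$ fitting in the $r_i \times (s_i-r_i)$ rectangle are already in the desired form, and the remaining terms (with $\nu$ too wide) are replaced using the rim-hook removal rule from Proposition \ref{prop:hookrim} by signed combinations of $s^i_{\nu'}$ that do fit in the box, with coefficients that are polynomials in the Chern classes of $W_j$ for $j \neq i$. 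Those Chern classes are themselves certain $s^j_{\lambda_j}$, so the expression can be iterated.

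For (b), I would exploit the tower-of-Grassmannian-bundles description $M_\rho \to M_{\rho-1} \to \cdots \to M_1 \to \mathrm{pt}$ recalled in the introduction, where each $M_i \to M_{i-1}$ is a Grassmann bundle with fiber $\Gr(s_i, r_i)$. The classical Grassmann bundle formula (Leray--Hirsch applied to $\Gr(s_i,r_i)$) says that $H^*(M_i)$ is a free $H^*(M_{i-1})$-module on the classes $\{s^i_{\lambda_i} : \lambda_i \subseteq r_i \times (s_i-r_i)\}$; iterating gives
\[
\dim_\CC H^*(M_\theta(Q,\br)) \;=\; \prod_{i=1}^{\rho}\binom{s_i}{r_i},
\]
which is exactly the number of tuples $(\lambda_1,\dots,\lambda_\rho)$ appearing in $\mathcal{S}(Q)$. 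Combined with (a), this forces linear independence and completes the proof.

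The main obstacle is ensuring the reduction in (a) terminates cleanly. I would handle this by a double induction: an outer induction on the vertex index $i$ (so coefficients involving $W_j$ with $j < i$ are already in the inductively established basis), together with an inner induction on the width of $\nu$, since each application of Proposition \ref{prop:hookrim} strictly decreases that width. A slicker but less self-contained alternative is to skip Theorem \ref{thm:cohomology} altogether and deduce both parts simultaneously from the iterated Grassmann bundle formula -- but this essentially duplicates the proof of Theorem \ref{thm:cohomology} and obscures the role of the rim-hook rule that is the focus of the paper.
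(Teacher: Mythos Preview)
Your proposal is correct and follows the same two-step logic as the paper's proof: use Theorem~\ref{thm:cohomology} to establish spanning, then match cardinality against $\dim_\CC H^*(M_\theta(Q,\br))=\prod_{i=1}^\rho\binom{s_i}{r_i}$, the latter computed from the Grassmann bundle tower. The paper compresses step~(a) into a single clause (``the theorem implies that all such products generate the cohomology''), whereas you spell out the Littlewood--Richardson expansion and the iterated rim-hook reduction with the double induction on vertex index and width; this extra detail is sound and arguably clarifies a step the paper treats as immediate.
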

\begin{proof} Following from the Grassmann bundle structure, the cohomology of $M_\theta(Q,\br)$ has dimension $\prod_{i=1}^\rho {s_i \choose r_i}$. The theorem implies that all such products generate the cohomology; as there are precisely  $\prod_{i=1}^\rho {s_i \choose r_i}$ such products, they must be a basis.
\end{proof}
We denote the set of elements of this basis $\mathcal{S}(Q)$. As we will see, the structure constants are not positive.

To prove the theorem, we will use the rim-hook removal operation on a partition that first arose in the quantum cohomology of the Grassmannian \cite{bertram} \footnote{There are slightly more general rim-hooks, but these are the only ones we will need.}. A rim-hook of a partition $\lambda$ of length $n$ is a connected path of $n$ boxes in $\lambda$ starting from the top right box, and staying within the rim or boundary of $\lambda$. For example, the partition
$$\yng(4,2,2)$$
has a rim-hook of length  $5$ given by the marked boxes:
\[\young(~\bullet \bullet \bullet,~\bullet,~\bullet).\]
The following is not a rim-hook because it is not continuous:
\[\young(~~\bullet \bullet,~\bullet,~\bullet).\]
The following is not a rim-hook because it doesn't stay on the rim:
\[\young(\bullet \bullet \bullet \bullet,~~,~~).\]
The height of a rim-hook is the number of rows in which it appears: in the example above, it is of height 3. Notice that rim-hook of a given length is unique. A rim-hook removal is the removal of a rim-hook from the partition: the resulting diagram may or may not be a partition. A rim-hook removal is said to \emph{exist} if the removal is still a partition. Call this partition $\lambda'$. In the above example, the rim-hook removal of length 4 does not exist, but the one of length 5 does. Then if $s_\lambda$ is a Schur polynomial, we set
\[\HR^n(s_\lambda)=(-1)^{h+1}s_{\lambda'}\]
if the length $n$ rim-hook removal exists, and set $\HR^n(\lambda)=0$ otherwise. Here $h$ is the height of the length $n$ rim-hook removal.
The authors \cite{bertram} show that  if $\lambda_1 \geq n$, then $s_\lambda$ and $\HR^n(s_\lambda)$ can also be related via the determinant construction of $s_\lambda$ as follows:
\[\prod_{i<j}(x_i-x_j) \HR^{n}(s_{\lambda})=\det
\begin{bmatrix}
x_1^{r-1+\lambda_1-n}& \cdots & x_r^{r-1+\lambda_1-n} \\
\vdots & & \vdots \\
x_1^{1+\lambda_r}& \cdots & x_r^{1+\lambda_r} \\
x_1^{\lambda_r}& \cdots & x_r^{\lambda_r} \\
\end{bmatrix}.\]
Now consider a quiver flag variety $M_\theta(Q,\br)$. Re-write the equation \eqref{eqn:Iab} as 
\[\prod_{a\in Q_1,t(a)=i} (\sum_{k=0}^{r_{s(a)}} (-1)^{k} x_{ij}^{r_{s(a)}-k} s^{s(a)}_{(k)^t})=0.\]
Here $s^i_{(k)^t}$ is the $k^{th}$ elementary symmetric polynomial, the Schur polynomial associated to the transpose of the partition $(k)$. Further expand this as 
\begin{equation} \label{eqn:lowerdegree}\sum_{k=0}^{s_i} \sum_{\sum k_a=k} (-1)^{k} x_{ij}^{s_i-k} \prod_{a \in Q_1, t(a)=i} s^{s(a)}_{(k_a)^t}=0.\end{equation}
The sum over $\sum k_a=k$ is understood to be the sum over all choices $0 \leq k_a \leq r_{s(a)}$ such that $\sum k_a=k.$ 

In the proposition below, by the notation $s^i_{\lambda+\{s_i-k\}}$ we mean the partition obtained from $\lambda$ by adding $s_i-k$ to its first entry. Recall that we have set $s^0_\lambda=0$ for any partition $\lambda\neq \emptyset$.
\begin{prop}[Rim-hook removal rule]\label{prop:hookrim}
Let $\lambda$ a partition such that $\lambda_1 \geq s_i-r_i+1$. Then 
\[s^i_\lambda=\sum_{k=1}^{s_i} \sum_{\sum k_a=k} (-1)^{k+1} \HR^{s_i}(s_{\lambda+\{s_i-k\}}) \prod_{a\in Q_1, t(a)=i} s^{s(a)}_{(k_a)^t}.\]
\end{prop}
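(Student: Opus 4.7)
The plan is to reduce everything to a determinant manipulation by multiplying through by $\omega_i := \prod_{1 \leq j < k \leq r_i}(x_{ij} - x_{ik})$ and exploiting the relation \eqref{eqn:lowerdegree}, which holds (for every $j = 1,\dots, r_i$) in $H^*(M_\theta(Q,\br))$ viewed inside $H^*(V^{ss}(G)/T)$ via the lifting map. The ingredients are three: the Jacobi--Trudi style determinant for $\omega_i s^i_\lambda$, the relation \eqref{eqn:lowerdegree} which substitutes for $x_{ij}^{s_i}$, and the Bertram rim-hook determinant identity quoted just before the proposition.

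First I would write
\[
\omega_i\, s^i_\lambda \;=\; \det \begin{bmatrix} x_{i1}^{r_i-1+\lambda_1}& \cdots & x_{ir_i}^{r_i-1+\lambda_1} \\ \vdots & & \vdots \\ x_{i1}^{\lambda_{r_i}}& \cdots & x_{ir_i}^{\lambda_{r_i}} \end{bmatrix}.
\]
The hypothesis $\lambda_1 \geq s_i - r_i + 1$ is exactly what guarantees $r_i - 1 + \lambda_1 \geq s_i$, so I can factor $x_{ij}^{r_i - 1 + \lambda_1} = x_{ij}^{s_i} \cdot x_{ij}^{r_i - 1 + \lambda_1 - s_i}$ in each entry of the top row and apply \eqref{eqn:lowerdegree}, rearranged as
\[
x_{ij}^{s_i} \;=\; \sum_{k=1}^{s_i} \sum_{\sum k_a = k} (-1)^{k+1} x_{ij}^{s_i - k} \prod_{a \in Q_1,\, t(a)=i} s^{s(a)}_{(k_a)^t}
\]
inside $H^*(M_\theta(Q,\br))$. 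Crucially, because the quiver is ordered so that $s(a) < i$ for every arrow $a$ with $t(a) = i$, the coefficients $\prod_a s^{s(a)}_{(k_a)^t}$ involve only the Chern roots of the $W_{s(a)}$ and are therefore constants with respect to all the variables $x_{ij}$ appearing in the determinant.

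Next, multilinearity of the determinant in its first row lets me push these coefficients outside and split the determinant into a sum indexed by $(k; \{k_a\})$. Each resulting determinant has top row $(1,j)$-entry $x_{ij}^{r_i - 1 + \lambda_1 - k}$ and unchanged lower rows; setting $\mu = \lambda + \{s_i - k\}$ (which is a genuine partition because $\mu_1 = \lambda_1 + s_i - k \geq \lambda_1 \geq \lambda_2 = \mu_2$ for $0 \leq k \leq s_i$), one has $\mu_1 - s_i = \lambda_1 - k$ and so the Bertram determinant formula recalled in the paper identifies this determinant exactly with $\omega_i\, \HR^{s_i}(s_\mu) = \omega_i\, \HR^{s_i}(s_{\lambda + \{s_i - k\}})$. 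Assembling and cancelling the common factor of $\omega_i$ (which is legitimate in the polynomial ring since both sides are manifestly $\omega_i$-divisible alternating polynomials before cancellation) produces the claimed identity with the correct signs.

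The main obstacle is the bookkeeping at two points: verifying that the signs from \eqref{eqn:lowerdegree} combine with the multilinear expansion to give exactly $(-1)^{k+1}$ in front of each $\HR^{s_i}$-term, and confirming that the Bertram rim-hook identity applies uniformly, including the degenerate cases where the length $s_i$ rim-hook of $\lambda + \{s_i - k\}$ fails to exist (in which case both $\HR^{s_i}(s_{\lambda + \{s_i - k\}})$ and the corresponding determinant vanish, because two rows collide modulo sign). Once these sign and degeneracy issues are settled, the rest is a mechanical manipulation and the proof is complete.
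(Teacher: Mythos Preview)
Your proposal is correct and follows essentially the same route as the paper's own proof: write $\omega_i s^i_\lambda$ as the bialternant determinant, use the relation \eqref{eqn:lowerdegree} to replace $x_{ij}^{s_i}$ in the top row, expand by multilinearity, and identify each resulting determinant with $\omega_i\,\HR^{s_i}(s_{\lambda+\{s_i-k\}})$ via the Bertram formula. Your write-up is in fact more detailed than the paper's (the observation that $s(a)<i$ makes the coefficients constants in the $x_{ij}$, the check that $\mu$ is a genuine partition, and the handling of the degenerate rim-hook cases are all left implicit there); the only point to tighten is that the final ``cancellation'' of $\omega_i$ is not literally happening in the polynomial ring---you have used \eqref{eqn:lowerdegree}, so the equality is modulo $I^{ab}$---and the passage to $s^i_\lambda=\text{RHS}$ in $H^*(M_\theta(Q,\br))$ is via Martin's theorem after multiplying by the remaining Vandermonde factors.
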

\begin{proof}
Note that 
\[\prod_{1\leq j<k \leq r_i} (x_{ij}-x_{ik}) s^i_\lambda=\det
\begin{bmatrix}
x_{i1}^{r_i-1+\lambda_1}& \cdots & x_{ir_i}^{r_i-1+\lambda_1} \\
\vdots & & \vdots \\
x_{i1}^{1+\lambda_{r_i-1}}& \cdots & x_{ir_i}^{1+\lambda_{r_i-1}} \\
x_{i1}^{\lambda_{r_i}}& \cdots & x_{i r_i}^{\lambda_{r_i}} \\
\end{bmatrix}.\]
Replace the $x_{ij}^{r_i-1+\lambda_1}=x_{ij}^{s_i}x_{ij}^{r_i-1+\lambda_1-s_i}$ by replacing $x^{s_i}$ using \eqref{eqn:lowerdegree}. Using multi-linearity of the determinant in rows and the formula for rim-hook removal, we obtain that the above expression is equal to 
\[\prod_{1\leq j<k \leq r_i} (x_{ij}-x_{ik})  \sum_{k=1}^{s_i} \sum_{\sum k_a=k} (-1)^{k+1} \HR^{s_i}(s_{\lambda+\{s_i-k\}}) \prod_{a\in Q_1, t(a)=i} s^{s(a)}_{(k_a)^t}.\]
\end{proof}
\begin{eg}Suppose $M_\theta(Q,\br)$ is the flag variety of quotients of $\CC^n$, $\Fl(n;r_1,\dots,r_\rho)$.  
Then $s^i_\lambda$ is the Schur polynomial in the Chern roots of the $i^{th}$ tautological quotient bundle on the flag variety (the one of rank $r_i$). The rim-hook removal rule in this case states that if $\lambda_1 \geq r_{i-1}-r_i+1$, then 
\[s^i_\lambda=\sum_{k=1}^{r_{i-1}} (-1)^{k+1} \HR^{r_{i-1}}(s_{\lambda+\{r_{i-1}-k\}})s^{i-1}_{(k)^t}.\]
By convention, $r_{0}=n$. Note that this implies that $s^1_\lambda=0$ for any too-wide $\lambda$, as $s^0_\lambda=0$ for non-empty partitions $\lambda$.
\end{eg}
More examples for non-flag quiver flag varieties can be found below, illustrating the quantum rim-hook removal rule. 

We are now ready to prove Theorem \ref{thm:cohomology}.
\begin{proof} It is clear that the $s^i_\lambda$ generate the cohomology, when we do not restrict the size of the $\lambda$. Of course, if $\lambda$ is too long, $s^i_\lambda=0$. If $s^i_\lambda$ is too wide,  the proposition can be used to write it as a polynomial in elementary symmetric polynomials in other sets of variables, and some $s^i_{\lambda'}$ where each $\lambda'$ is a strictly thinner partition. By repeating this, one sees that the $s^i_\lambda$ fitting inside an $r_i \times (s_i-r_i)$ box generate the cohomology. 
\end{proof}
In particular, the theorem gives an efficient algorithm for computing products in the cohomology ring of the quiver flag variety in the basis $\mathcal{S}(Q)$: first compute products using the usual Littlewood--Richardson coefficients, then reduce any too-wide partitions via rim-hook removals, and repeat. 
\section{The Abelian/non-Abelian correspondence and quantum cohomology}
In this section, we briefly describe small quantum cohomology and the conjecture of \cite{CiocanFontanineKimSabbah2008}. 
\subsection{Gromov--Witten invariants and quantum cohomology}
We give a very brief overview of Gromov--Witten invariants and quantum cohomology (considering only small quantum cohomology in this paper). Let $Y$ be a smooth projective variety. Given $n \in \ZZ_{\geq 0}$ and $\beta \in H_2(Y)$, let  $M_{0,n}(Y,\beta)$ be the moduli space of genus zero stable maps to $Y$ of class $\beta$, and with $n$ marked points~\cite{Kontsevich95}. While this space may be highly singular and have components of different dimensions, it has a \emph{virtual fundamental class} $[M_{0,n}(Y,\beta)]^{virt}$ of the expected dimension~\cite{BehrendFantechi97,LiTian98}. There are natural evaluation maps $ev_i: M_{0,n}(Y,\beta) \to Y$ taking the class of a stable map $f: C \to Y$ to $f(x_i)$, where $x_i \in C$ is the $i^{th}$ marked point. There is also a line bundle $L_i \to M_{0,n}(Y,\beta)$ whose fiber at $f: C \to Y$ is the cotangent space to $C$ at $x_i$. The first Chern class of this line bundle is denoted $\psi_i$. Define:
\begin{equation}
  \label{eq:GW}
  \langle \tau_{a_1}(\alpha_1),\dots,\tau_{a_n}(\alpha_n) \rangle_{n,\beta} = \int_{[M_{0,n}(Y,\beta)]^{virt}} \prod_{i=1}^n ev_i^*(\alpha_i) \psi_i^{a_i}
\end{equation}
where the integral on the right-hand side denotes cap product with the virtual fundamental class.  If $a_i=0$ for all $i$, this is called a (genus zero) Gromov--Witten invariant and the $\tau$ notation is omitted; otherwise it is called a descendant invariant. It is deformation invariant. 

Now suppose that $Y$ is a smooth Fano variety.  The \emph{quantum cohomology ring} is defined by giving a deformation of the usual cup product of $H^*(Y)$ for every $t \in H^*(Y)$. The structural constants defining the new product are given by Gromov--Witten invariants.

Let $\{T_i\}$ be a homogeneous basis of $H^*(Y,\CC))$ and $\{T^i\}$ a dual basis. Let $t \in H^2(Y,\CC)$. The small quantum product is defined by 
\[\langle T^a \circ_t T^b, T^c\rangle:=\sum_{d \in H_2(Y)} e^{\int_{d}t} \int_{[M_{0,3}(Y,d)]^{virt}} ev_1^*(T^a) ev_2^*(T^b) ev_3^*(T^c).\]
The fact that $Y$ is Fano ensures that this sum is finite. 

  If $T_1,\dots,T_r$ are a basis of $H^2(Y,\CC)$, and $t_i$ a parameter for $T_i$, define $q_i:=e^{t_i}.$ For $d=\sum_{i=1}^r d_i T_i$, write $q^d=q_1^{d_1} \cdots q_r^{d_r}$. We can re-write the above as
\[\langle T^a \circ T^b, T^c\rangle:=\sum_{d \in H_2(Y)} q^d \int_{[M_{0,3}(X,d)]^{virt}} ev_1^*(T^a) ev_2^*(T^b) ev_3^*(T^c).\]
We view the quantum product as giving a product structure on the cohomology of $H^*(X)$ with coefficients in the polynomial ring in the $q_i$. 

\paragraph{\emph{The $J$-function}} We will need a generating function for descendent invariants called the \emph{$J$-function} in \S \ref{sec:proofconj}; we define it here (this is the small $J$-function). The $J$-function assigns an element of $H^*(Y) \otimes N(Y)[[z^{-1}]]$  (here $N(Y)$ is Novikov ring of $Y$) to every element of $H^0(Y)\oplus H^2(Y)$, as follows. The $J$-function is given by
\begin{equation}
  \label{eq:J}
J_Y(\tau,z):=e^{\tau/z}(1+z^{-1} \sum_i \langle\langle T_i/(z-\psi)\rangle\rangle T^i).
\end{equation}
Here $1$ is the unit class in $H^0(Y)$, $\tau \in H^2(Y)$, and
\begin{equation}
  \label{eq:J_correlator}
  \langle\langle T_i/(z-\psi)\rangle\rangle=\sum_{\beta \in \NE_1(Y)} q^\beta \sum_{a=0}^\infty\frac{1}{z^{a+1}} \langle \tau_a(T_i)\rangle_{1,\beta}.
\end{equation}
The small $J$-function satisfies
\[J_Y(\tau,z)=1+\tau z^{-1}+O(z^{-2}).\]
We sometimes consider the $J$-function to be $J_Y(0,z)$. 
Closed forms for the small $J$-function of toric complete intersections and toric varieties are known~\cite{Givental98}. If $Y$ has a torus action, one can define an equivariant $J$-function by replacing classes with the equivariant versions. 
\subsection{The conjecture of Ciocan-Fontanine--Kim--Sabbah}
Consider a GIT quotient as above, $X_G$, and its abelianisation $X_T$. Fix a lifting of $H^*(X_G)$ to $H^*(X_T)^W$. One can interpret the results of Martin as the statement that, for any $\alpha, \beta \in X_G$,
\[\tilde{(\alpha \cup \beta)} \cup \omega= \tilde{\alpha} \cup \tilde{\beta} \cup \omega.\]
For Grassmannians, as the authors of \cite{CiocanFontanineKimSabbah2008} observe, the naive generalisation of this statement to quantum cohomology is true, when one specialises the quantum product on the right hand side appropriately:
\[\tilde{(\alpha *_G \beta)} \cup \omega= \tilde{\alpha} *( \tilde{\beta} \cup \omega).\]
However, their conjecture is more complicated than this naive version, to a large part arising from the fact that in general, there is no natural relation between taking the cup product of $\tilde{\alpha}$ with $\omega$ and taking the quantum product  of $\tilde{\alpha}$ with $\omega$ (the conjecture is about big quantum cohomology, however, we restrict to small quantum cohomology). For the Grassmannian, these two products coincide. The key ingredient to the quantum rim-hook removal rule is that the choice of basis  $\mathcal{S}(Q)$ shares this special feature with the Grassmannian. Let us first state the general conjecture, and then explain how to specialise. Fix a basis of the cohomology of $X_G$, and let $t_i$ be the coordinates in this basis. Let $\tilde{t_i}$ be the coordinates on the fixed lift of $H^*(X_G)$ to $H^*(X_T)^W$. There are more curve classes on $X_T$ than $X_G$, however, there is a natural map $p$ specialising the quantum parameters of $X_T$ to those of $X_G$ (see \cite{CiocanFontanineKimSabbah2008} for the general case, or below for the quiver flag variety case). Let $*_G$ denote the quantum product in $X_G$, and $*$ the quantum product in $X_T$ specialised via $p$. For $\alpha, \alpha'$, there are unique $\zeta, \zeta'$ such that 
\[\tilde{\alpha}\cup \omega=\zeta * \omega, \hspace{1mm} \tilde{\alpha'} \cup \omega=\zeta' * \omega.\]
Then the conjecture of Ciocan-Fontanine--Kim--Sabbah is the following statement.
\begin{conjecture}[\cite{CiocanFontanineKimSabbah2008}]\label{conj:original} Let $\alpha, \alpha', \zeta, \zeta'$ be as above. Then 
\[(\tilde{(\alpha *_G \alpha')} \cup \omega)(t)=(\zeta*\zeta'*\omega)(\tilde{t}(t),0)\]
where $\tilde{t}(t)$ is an appropriate change of variables.  
\end{conjecture}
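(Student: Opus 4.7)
The plan is to deduce Conjecture~\ref{conj:original} for a Fano quiver flag variety $M_\theta(Q,\br)$ by specialising the Abelian/non-Abelian correspondence for $I$-functions proved by Webb~\cite{Webb2018}. Webb establishes the conjecture whenever the mirror map on the $X_G$ side is trivial, and checks this hypothesis under an assumption Fano index $\geq 2$. So the game is to prove, by direct asymptotic analysis, that the mirror map is trivial for every Fano quiver flag variety, including the Fano index one case. Once this is in hand, Webb's theorem gives the conjecture with the change of variables $\tilde{t}(t)$ equal to the identity, which is exactly the form in which Conjecture~\ref{conj:original} is stated.

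First I would write down Givental's small $I$-function for the toric abelianisation $X_T=M_\theta(Q^{ab},\br)$, and form its Euler twist by the product of roots $\prod_{\alpha\in R^+} \alpha$ that mediates the Abelian/non-Abelian correspondence. Specialising the toric curve classes to classes on $X_G$ via the map $p$ yields a hypergeometric series $I^{\mathrm{tw}}(q,z)\in H^*(X_T)^W[[q_1,\dots,q_\rho]][[z^{-1}]]$. The core step is a degree-in-$z$ estimate: for each nonzero effective class $\beta=\sum d_i [\ell_i]$, the contribution $q^\beta I^{\mathrm{tw}}_\beta(z)$ is a ratio of products of linear forms in $z$, whose top $z$-degree equals the difference between the number of numerator and denominator factors, i.e.\
\[
\sum_{a\in Q_1} \bigl(d_{t(a)}\,r_{s(a)} - d_{s(a)}\,r_{t(a)}\bigr) - \sum_i 2\binom{r_i}{2} d_i\cdot 0,
\]
which is precisely $-K_{X_G}\cdot\beta$. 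The graph-reduced Fano condition $s_i-s_i'>0$ forces this to be strictly negative for every $\beta\neq 0$, so $I^{\mathrm{tw}}(q,z)=1+\tau(q)/z+O(z^{-2})$ with $\tau(q)=\sum_i (\log q_i)\, D_i$ linear in $H^2$. This is the triviality of the mirror map, and it is the main obstacle: one has to bookkeep the twist by roots, the Vandermonde denominator $\omega$, and the contributions of each arrow simultaneously, and it is the combinatorics of the abelianised quiver that makes the signs work out.

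With the mirror map trivial, Webb's $I=J$ identity for $X_G$ holds after the na\"ive specialisation $q_i=e^{t_i}$, with no reparametrisation. Applying his comparison theorem for the $D$-module structures then identifies the quantum cohomology of $X_G$ with the appropriate $W$-invariant quotient of the twisted quantum cohomology of $X_T$. Contracting with $\omega$ and extracting the coefficient of $z^{0}$ in the resulting flat-section equation yields exactly the identity
\[
\widetilde{(\alpha *_G \alpha')}\cup\omega=\zeta*\zeta'*\omega
\]
with $\tilde{t}(t)=t$, which is the content of Conjecture~\ref{conj:original} for $M_\theta(Q,\br)$. As a byproduct of this identification one recovers the explicit presentation of $QH^*(M_\theta(Q,\br))$ given in Theorem~\ref{thm:intro2}, since the relations \eqref{eq:qr} are precisely those cut out by the twisted quantum $D$-module on $X_T$ after taking $W$-invariants.
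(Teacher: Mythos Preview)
Your overall strategy matches the paper's: reduce Conjecture~\ref{conj:original} to the Abelian/non-Abelian correspondence for $J$-functions via \cite{CiocanFontanineKimSabbah2008}, use \cite{Webb2018} for $I$-functions, and show $I=J$ by checking the mirror map is trivial. However, there are two genuine gaps.

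First, your degree-in-$z$ estimate is not sharp enough. Counting linear factors shows that the $q^\beta$ contribution to $I$ has leading $z$-power $-\langle -K_{X_G},\beta\rangle$; under the Fano hypothesis $s_i-s_i'>0$ this is only $\leq -1$, not $\leq -2$. (Your displayed formula also has a stray ``$\cdot\,0$'' and the wrong sign.) When the Fano index is one, i.e.\ $\tilde d=e_{ij}$ with $s_i-s_i'=1$, the naive count leaves a potential $z^{-1}$ term. The paper handles exactly this case by computing $I_{Q^{ab}}(e_{ij})I_R(e_{ij})$ directly: the $s_i$ factors $\tfrac{1}{D_a+\lambda_a+z}$ contribute $z^{-s_i}$ while the root twist contributes $z^{r_i-1}$, so the leading power is $(r_i-1)-s_i\leq -2$ because $s_i>r_i$. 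This uses the standing assumption $s_i-r_i>0$, which is independent of the Fano condition $s_i-s_i'>0$. Without this extra step your argument does not establish triviality of the mirror map.

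Second, you omit the torus action. The reduction in \cite{CiocanFontanineKimSabbah2008} (their Theorem~4.3.6) requires an equivariant $J$-function for a torus action with isolated fixed points; the paper constructs the action of $T_Q=\prod_{a\in Q_1}\CC^*$ and proves this in Proposition~\ref{prop:isolated} via the embedding into a product of Grassmannians. You also blur the division of labor: \cite{Webb2018} supplies the $I$-function correspondence, while the passage from the $J$-function correspondence to Conjecture~\ref{conj:original} is \cite{CiocanFontanineKimSabbah2008}, and the Fano index $\geq 2$ hypothesis enters in two separate places---their Lemma~3.6.1(i) (dealt with earlier in the paper by the corollary $\tilde\alpha*\omega=\tilde\alpha\cup\omega$) and the $I=J$ step you focus on.
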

The authors of \cite{CiocanFontanineKimSabbah2008} observe that this change of variables is trivial when $X_G$ is a flag variety (this is Lemma 3.6.1(i) in the paper). This is because the flag variety has Fano index at least 2, which forces $\alpha*\omega=\alpha \cup \omega$ for any $\alpha\in H^2(X_T)$ . If $X_G$ is a Fano quiver flag variety, its Fano index can be one, but it is still true that $\alpha*\omega=\alpha \cup \omega$,  in fact for any $\alpha \in \mathcal{S}(Q)$. We will see this in the next section, by taking a closer look at the quantum cohomology ring of $X_T$, and then use it to simplify the conjecture considerably. In the final section, we will show that the conjecture holds for any Fano quiver flag variety. 
\begin{thm}\label{thm:conjecture} Conjecture \ref{conj:original} holds for any Fano quiver flag variety.
\end{thm}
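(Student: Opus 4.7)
The plan is to deduce Theorem \ref{thm:conjecture} from the Abelian/non-Abelian correspondence for $I$-functions established by Webb \cite{Webb2018}. Webb writes down an explicit $I$-function $I_{X_G}(q,z)$ for a GIT quotient $X_G$ of a vector space: it is obtained from the $I$-function of the abelianisation $X_T$ (a toric variety) by applying a Weyl-invariant modification built from the roots of $G$. He then shows that $I_{X_G}(q,z)$ equals the $J$-function $J_{X_G}(\tau(q),z)$ of $X_G$ under a mirror map $q \mapsto \tau(q)$, and deduces Conjecture \ref{conj:original} whenever this mirror map is trivial. Webb verifies triviality under a Fano index $\geq 2$ hypothesis; our task is to extend this to Fano quiver flag varieties of index $1$.

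First, I would write down $I_{X_G}(q,z)$ explicitly for a Fano quiver flag variety. Using the tower-of-Grassmannian-bundles description and the abelianised quiver $Q^{ab}$, the toric $I$-function of $X_T$ is a standard hypergeometric sum indexed by $\beta \in \NE(X_T)$, with building blocks coming from the arrow weights $D_a = -e_{s(a)} + e_{t(a)}$. The Abelian/non-Abelian recipe multiplies each term by a root factor $\prod_{\alpha \in R^+} \frac{\alpha + z\langle \alpha,\beta\rangle}{\alpha}$ and then symmetrises via $W = \prod_i S_{r_i}$. The $z^{-1}$-coefficient of this series, as a function of $q$, is exactly the mirror map $\tau(q)$, and triviality is the statement that $\tau(q) = \sum_i (\log q_i)\, H_i$ for a basis $\{H_i\}$ of $H^2(X_G)$.

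The key step is a degree analysis of the expansion in powers of $z^{-1}$. For any curve class $\beta \neq 0$ in $\NE(X_G)$ I would show that the corresponding contribution to $I_{X_G}$ has leading $z$-order at most $z^{-2}$, unless $\beta$ represents a single quantum parameter $q_i$, in which case the contribution is of the form $H_i\, z^{-1}$ plus higher-order terms. For classes with Fano degree $\langle -K_{X_G}, \beta\rangle \geq 2$ this falls out of Webb's analysis. The new case is Fano degree $1$: for a Fano quiver flag variety these are essentially the fibre classes of the Grassmann bundle tower, and I would inspect the hypergeometric factors term-by-term, using that $r_0=1$ together with the cancellation forced by the root product $\prod_{\alpha \in R^+}$ after $W$-symmetrisation, to confirm that these classes contribute only to $H^2$ in the $z^{-1}$ coefficient and that the contribution is precisely $\sum_i (\log q_i)H_i$.

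The main obstacle is exactly this Fano degree $1$ case: one has to keep track of which terms in the root-twisted hypergeometric series survive symmetrisation and land in $H^0$ (which would obstruct triviality) versus $H^2$. I expect that the product structure of $I_{X_G}$ over the vertices of $Q$, combined with the antisymmetry of $\omega$ and the explicit form of the weights $D_a$, gives exactly the rigidity needed to rule out stray $H^0$ or $H^{>2}$ components. Once the mirror map is shown to be trivial, Webb's $I = J$ statement applies with $\tau = \log q$, and the derivation of Conjecture \ref{conj:original} in \cite{CiocanFontanineKimSabbah2008,Webb2018} carries through unchanged to give the result.
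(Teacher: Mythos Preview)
Your overall strategy matches the paper: reduce to showing the mirror map is trivial (equivalently, that the $z^{-1}$-coefficient of $I_{M_Q}(0,z)$ vanishes), then invoke \cite{Webb2018} for $I=J$ and \cite{CiocanFontanineKimSabbah2008} to pass from the small $J$-function correspondence to Conjecture~\ref{conj:original}. But there are two genuine gaps in the key step.

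First, your treatment of the Fano-degree-$1$ classes points to the wrong mechanism and to the wrong target. You say these classes should contribute ``$H_i\,z^{-1}$'' and that this will emerge from cancellation under $W$-symmetrisation and the antisymmetry of $\omega$. In fact the $z^{-1}$-coefficient of the $\tilde d\neq 0$ part of $I_{M_Q}(0,z)$ must vanish outright (a $q_i$-term cannot produce a $\log q_i$ contribution), and no symmetrisation is used. The paper identifies the only effective classes with $\langle -K_{M_Q},\tilde d\rangle=1$ as $\tilde d=e_{ij}$, and for these the factor $I_{Q^{ab}}(\tilde d)I_R(\tilde d)$ has $s_i$ denominator factors $(D_a+\lambda_a+z)$ against only $r_i-1$ root-numerator factors carrying a $z$. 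Hence the leading order in $z^{-1}$ is $s_i-(r_i-1)\geq 2$, using only the standing hypothesis $s_i>r_i$. This is a direct power count on each individual summand; the ``cancellation forced by $W$-symmetrisation'' you rely on never enters.

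Second, you omit the torus-action hypothesis. The reduction in \cite{CiocanFontanineKimSabbah2008} from the equivariant $J$-function correspondence to Conjecture~\ref{conj:original} requires a torus action on $X_G$ with isolated fixed points; this is not automatic for quiver flag varieties. The paper constructs an action of $T_Q=\prod_{a\in Q_1}\CC^*$ and proves isolatedness by embedding $M_Q$ into a product of Grassmannians (Proposition~\ref{prop:isolated}). Without this, your final sentence ``the derivation \dots\ carries through unchanged'' is not justified.
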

The proof of this theorem involves a close look at the $I$-function of Fano quiver flag varieties; we delay the proof until the last section so that those uninterested in $I$-functions can more conveniently omit it. 
\subsection{The conjecture for Fano quiver flag varieties}
We first consider the quantum cohomology ring of a Fano toric quiver flag variety. In this case, the Batyrev ring relations can be expressed in terms of the quiver.
\begin{prop}[\cite{Batyrev93}] Let $M_\theta(Q,(1,\dots,1))$ be a toric Fano quiver flag variety, and set $R:=\QQ[q_1,\dots,q_\rho].$ Then
\[QH^*(M_\theta(Q,\br),\CC) \cong R[x_1,\dots,x_\rho]/I,\]
where $I$ is generated by the following elements, for each $i \in \{1,\dots,\rho\}$:
\[\prod_{\substack{a\in Q_1 \\ t(a)=i}} (-x_{s(a)}+x_{t(a)})-q_i \prod_{\substack{a\in Q_1 \\ s(a)=i}} (-x_{s(a)}+x_{t(a)}). \label{eqn:qabrelation}\]
Here we set $x_0=0$. If there are no outgoing arrows from vertex $i$, then the second summand above is understood to be $q_i$.
\end{prop}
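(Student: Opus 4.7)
The plan is to recognise this as a direct application of Batyrev's presentation of the small quantum cohomology of a smooth Fano toric variety, combined with the quiver-theoretic description of the fan already set up for the classical cohomology.

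First, I would assemble the toric data. Dualising the exact sequence
\[ 0 \to \ZZ^{|Q_1|-\rho} \to \ZZ^{|Q_1|} \to \ZZ^\rho \to 0 \]
gives a lattice $N$ in which each arrow $a \in Q_1$ corresponds to a ray $v_a$, and the cocharacter lattice relations read $\sum_{a : t(a)=i} v_a = \sum_{a : s(a)=i} v_a$ for $i = 1,\dots,\rho$ (setting the sum over the empty set to $0$ when $i$ is a sink). Under $\ZZ^{|Q_1|} \to \Pic = \ZZ^\rho$ the class of the toric divisor $D_a$ is $-x_{s(a)}+x_{t(a)}$, matching the classical relations already used in the paper.

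Next, I would identify the primitive collections of the fan. The observation in the earlier classical proposition was exactly that the maximal subsets $S \subset Q_1$ with $\theta$ not in the cone over $\{D_a : a \in S\}$ are the complements of $P_i := \{a \in Q_1 : t(a)=i\}$; equivalently, the $P_i$ are precisely the primitive collections of the fan. For each such $P_i$ the primitive relation in $N$ is
\[ \sum_{a \in P_i} v_a \;=\; \sum_{a : s(a)=i} v_a, \]
and the corresponding curve class in $H_2(M_\theta(Q,\br))$ is the generator $e_i^\vee$ dual to $x_i \in \Pic$; under the identification $q^{e_i^\vee} = q_i$ this is the Novikov variable $q_i$.

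Third, I would invoke Batyrev's theorem: for a smooth Fano toric variety, the small quantum cohomology ring is generated by divisor classes modulo the linear Picard relations together with, for each primitive collection $P$ with primitive relation $\sum_{a \in P} v_a = \sum_b c_b v_b$, the quantum relation
\[ \prod_{a \in P} D_a \;=\; q^{\beta(P)} \prod_b D_b^{c_b}. \]
Substituting the data assembled above yields precisely
\[ \prod_{a : t(a)=i}\bigl(-x_{s(a)}+x_{t(a)}\bigr) \;=\; q_i \prod_{a : s(a)=i}\bigl(-x_{s(a)}+x_{t(a)}\bigr), \]
and the sink case gives $\prod_{a:t(a)=i}(-x_{s(a)}+x_{t(a)}) = q_i$ because the right-hand product is empty. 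The linear Picard relations are absorbed into the parametrisation $D_a = -x_{s(a)}+x_{t(a)}$ (with $x_0=0$), matching the form displayed in the proposition.

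The main obstacle, such as it is, lies in the bookkeeping: one must verify that the $P_i$ exhaust the primitive collections of the fan (so that no further quantum relations appear) and that the duality between the Picard lattice and the curve class lattice assigns the Novikov parameter $q_i$ exactly to $\beta(P_i)$. Both are purely combinatorial checks read off from the exact sequence and the stability $\theta$, so no genuine Gromov--Witten calculation is required beyond Batyrev's theorem for smooth Fano toric varieties.
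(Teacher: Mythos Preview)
The paper does not give a proof of this proposition: it is stated with the citation \cite{Batyrev93} and used as a black box. Your proposal is correct and is exactly the argument one would give to spell out the citation---you identify the primitive collections as the $P_i=\{a:t(a)=i\}$ (the same combinatorial fact the paper records in the proof of the classical-cohomology proposition just above), read off the primitive relations from the weight exact sequence, and apply Batyrev's presentation of the small quantum ring of a smooth Fano toric variety. So your approach is the intended one, simply made explicit where the paper leaves it implicit.
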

From now on, let $X_G:=M_\theta(Q,\br)$ be a fixed choice of Fano quiver flag variety, and let $\mathcal{S}(Q)$ be the basis of $H^*(X_G)$ described in the previous section. Let $R:=\CC[q_1,\dots,q_\rho]$, and set $R_T:=\CC[q_{ij}: 1 \leq i \leq \rho, 1 \leq j \leq r_i]$.  We fix the choice of lifting of elements of $\mathcal{S}(Q)$ to $H^*(X_T)$ to be the natural one. The conjecture of \cite{CiocanFontanineKimSabbah2008} compares the quantum product of $X_G$ with a specialisation of that $X_T$, given by the map
$$p: R_T \to R, q_{ij} \mapsto (-1)^{r_i-1}q_i.$$
We define $*$ to be the product in $\overline{QH}(X_T):=R[x_{ij}:1 \leq i \leq \rho, 1 \leq j \leq r_i]/{I^{ab}_q}$ obtained from specialising the quantum product in $QH(X_T)$ via $p$. Applying the proposition, the ideal $I^{ab}_q$ is generated by, for each $ 1 \leq i \leq \rho, 1 \leq j \leq r_i$:
\[\prod_{\substack{a\in Q_1 \\ t(a)=i}}\prod_{k=1}^{r_{s(a)}} (-x_{s(a)k}+x_{t(a)j})-(-1)^{r_i-1}q_i \prod_{\substack{a\in Q_1 \\ s(a)=i}}\prod_{k=1}^{r_{t(a)}} (-x_{s(a)j}+x_{t(a)k}).\]
Expanding as in the classical case, we can re-write this as:
\begin{align} \label{eqn:qlowerdegree}
\sum_{k=0}^{s_i} \sum_{\sum k_a=k} (-1)^{k} x_{ij}^{s_i-k} \prod_{\substack{a \in Q_1 \\ t(a)=i}} s^{s(a)}_{(k_a)^t}-(-1)^{r_i-1}q_i \sum_{k=0}^{s_i'} \sum_{\sum k_a=k} (-1)^{s_i'-k} x_{ij}^{s_i'-k} \prod_{\substack{a \in Q_1 \\ t(a)=i}} s^{t(a)}_{(k_a)^t}.
\end{align}
Recall the definition of $s_i'$ from \eqref{eqn:si}. As before, the sum over $\sum k_a=k$ is understood to be the sum over all choices $0 \leq k_a \leq r_{s(a)}$ such that $\sum k_a=k.$  This equation means that quantum corrections enter when $x_{ij}$ appears with degree $s_i$ or higher. The following corollary is now immediate. 
\begin{cor}
Let $\alpha \in \mathcal{S}(Q)$. Then in $\overline{QH}(X_T)$,
\[\tilde{\alpha} * \omega = \tilde{\alpha} \cup \omega.\]
\end{cor}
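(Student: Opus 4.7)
The plan is to establish a degree bound on the polynomial $\tilde{\alpha}\cdot\omega$: I claim every variable $x_{ij}$ appears in it to degree strictly less than $s_i$. Once this is known, the observation preceding the corollary---that the quantum corrections in \eqref{eqn:qlowerdegree} take effect only when some $x_{ij}$ appears with degree $\geq s_i$---will show that reducing $\tilde{\alpha}\cdot\omega$ modulo $I^{ab}_q$ picks up no $q_i$-terms, so its reduction coincides with the classical one modulo $I^{ab}$, giving $\tilde{\alpha}*\omega=\tilde{\alpha}\cup\omega$ in $\overline{QH}(X_T)$.

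First I would exploit the variable-disjoint factorisation. Because $\tilde{\alpha}=\prod_{i=1}^\rho s^i_{\lambda_i}$ with each $\lambda_i$ fitting inside an $r_i\times(s_i-r_i)$ box, and $\omega=\prod_{i=1}^\rho\omega_i$ with $\omega_i=\prod_{1\leq j<k\leq r_i}(x_{ik}-x_{ij})$, and since the factors at distinct vertices involve disjoint sets of variables, the product factors cleanly as $\tilde{\alpha}\cdot\omega=\prod_{i=1}^\rho\bigl(\omega_i\,s^i_{\lambda_i}\bigr)$. It therefore suffices to bound the degree of each $x_{ij}$ in the single factor $\omega_i\,s^i_{\lambda_i}$.

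For this I would invoke the determinantal formula for Schur polynomials recalled earlier,
\[
\omega_i\,s^i_{\lambda_i}=\det\bigl(x_{ij}^{r_i-k+(\lambda_i)_k}\bigr)_{1\leq k,j\leq r_i}.
\]
Every monomial in the expansion picks one entry from each column, so its degree in $x_{ij}$ is bounded by the largest exponent appearing in the $j$-th column, which occurs in the top row and equals $r_i-1+(\lambda_i)_1$. Since $\alpha\in\mathcal{S}(Q)$, the partition $\lambda_i$ fits inside an $r_i\times(s_i-r_i)$ rectangle, so $(\lambda_i)_1\leq s_i-r_i$ and the degree of $x_{ij}$ in $\omega_i\,s^i_{\lambda_i}$ is at most $s_i-1<s_i$. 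Assembling over $i$ yields the required bound on $\tilde{\alpha}\cdot\omega$.

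The main subtlety will be making the closing implication precise. The generators of $I^{ab}_q$ in \eqref{eqn:qlowerdegree} differ from their classical counterparts in $I^{ab}$ only by terms carrying a factor of $q_i$, and by inspection such correction terms can be invoked in a reduction only when a factor of $x_{ij}^{s_i}$ (or higher) is present that must be eliminated using the leading piece of a generator. Since $\tilde{\alpha}\cdot\omega$ already has degree strictly less than $s_i$ in every $x_{ij}$, no such reduction is ever triggered, so the quantum and classical reductions produce the same element of $\overline{QH}(X_T)$ under the natural identification of underlying $R$-modules---precisely the claimed equality.
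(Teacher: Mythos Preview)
Your proof is correct and follows essentially the same approach as the paper's: factor $\tilde{\alpha}\cdot\omega$ vertex by vertex using the disjointness of variables, then use the box constraint on $\lambda_i$ to bound the degree of each $x_{ij}$ in $\omega_i s^i_{\lambda_i}$ by $s_i-1$, so that no quantum relation is ever invoked. You supply more detail than the paper does---in particular you make the degree bound explicit via the determinantal formula and spell out why the absence of high powers means the quantum and classical reductions agree---but the underlying argument is the same.
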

\begin{proof}
The basis element $\alpha$ is a product of $s^i_{\lambda_i}$, for some $\lambda_i$ fitting into an $r_i \times (s_i-r_i)$ box. The $s^i_\lambda$ have no variables in common, so it suffices to consider each one separately. The restriction that $\lambda_i$ fits inside an $r_i \times (s_i-r_i)$  box ensures that in the product $\omega_i * s^i_{\lambda_i}$, $x_{ij}$ never appears with degree higher than $s_i-1$, and therefore, no quantum corrections are necessary. 
\end{proof}
This corollary implies both that the change of variables in the Conjecture \ref{conj:original} is trivial and that if $\alpha, \alpha' \in \mathcal{S}(Q)$, that $\zeta=\tilde{\alpha}$ and $\zeta'=\tilde{\alpha'}$; that is, the conjecture implies that 
\begin{equation}\label{eqn:lift}\tilde{(\alpha *_G \alpha')} \cup \omega=\tilde{\alpha}*\tilde{\alpha'}*\omega.\end{equation}
We can now obtain a quantum version of Martin's theorem (Theorem \ref{thm:martin} above). 
\begin{thm}\label{thm:arrowquantum}Let $X_G$ be a Fano quiver flag variety. Then
\[QH(X_G) \cong \overline{QH}(X_T)^W/I_q\]
where $I_q$ is the ideal generated by $f$ such that $\omega*f \in I^{ab}_q$, where $f$ is a polynomial in $\mathcal{S}(Q)$.
\end{thm}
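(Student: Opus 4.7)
The plan is to construct an explicit $R$-algebra isomorphism $\Phi\colon QH(X_G) \to \overline{QH}(X_T)^W/I_q$ by $R$-linearly extending the map $\alpha \mapsto [\tilde{\alpha}]$ on the basis $\mathcal{S}(Q)$, then verify it is bijective.

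The first and most delicate step is to derive the congruence
\[\tilde{\alpha} * \tilde{\alpha'} \equiv \widetilde{\alpha *_G \alpha'} \pmod{I_q}, \qquad \alpha, \alpha' \in \mathcal{S}(Q).\]
Theorem \ref{thm:conjecture} (the CFKS conjecture, valid for Fano quiver flag varieties) yields the identity \eqref{eqn:lift}: $\widetilde{\alpha *_G \alpha'} \cup \omega = \tilde{\alpha} * \tilde{\alpha'} * \omega$. Applying the preceding Corollary $R$-linearly (since $\alpha *_G \alpha'$ expands as an $R$-combination of elements of $\mathcal{S}(Q)$) converts the left-hand cup product into a quantum product, giving $\omega * (\widetilde{\alpha *_G \alpha'} - \tilde{\alpha} * \tilde{\alpha'}) = 0$ in $\overline{QH}(X_T)$, which is precisely the defining condition for membership in $I_q$. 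This makes $\Phi$ a ring homomorphism.

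Next I establish injectivity. If $\sum c_\alpha \tilde{\alpha} \in I_q$, then $\sum c_\alpha (\tilde{\alpha} * \omega) = 0$ in $\overline{QH}(X_T)$. The Corollary converts this to $\sum c_\alpha (\tilde{\alpha} \cup \omega) = 0$, an identity in which no quantum parameters appear in the summand. Equating coefficients of each monomial $q^n$ gives $\sum_\alpha c_{\alpha, n} (\tilde{\alpha} \cup \omega) = 0$ in $H^*(X_T)$, so $\sum_\alpha c_{\alpha, n} \tilde{\alpha}$ lies in the classical Martin ideal, and Theorem \ref{thm:martin} forces $c_{\alpha, n} = 0$ for all $\alpha$ and $n$.

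For surjectivity I would use a graded Nakayama argument. The Fano assumption gives each $q_i$ positive degree, so $\overline{QH}(X_T)^W/I_q$ is a finitely generated, positively graded $R$-module. The generators \eqref{eq:qr} of $I^{ab}_q$ specialize at $q = 0$ to the generators of $I^{ab}$, and any $g \in \CC[x_{ij}]^W$ with $g \omega \in I^{ab}$ automatically satisfies $g \omega \in I^{ab}_q$, so $g \in I_q$. Therefore $I_q|_{q=0}$ equals the classical Martin ideal, and $(\overline{QH}(X_T)^W/I_q) \otimes_R \CC \cong H^*(X_G)$ by Theorem \ref{thm:martin}, with $\mathcal{S}(Q)$ as basis. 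Since $\Phi$ modulo $q$ reduces to the identity on $H^*(X_G)$, graded Nakayama upgrades surjectivity mod $q$ to surjectivity of $\Phi$.

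The main obstacle is the first step. Theorem \ref{thm:conjecture} supplies the cup-product identity and the Corollary supplies a quantum-classical conversion, but the Corollary as stated applies only to individual basis elements of $\mathcal{S}(Q)$; extending it to $\widetilde{\alpha *_G \alpha'}$, which lies in the $R$-span of $\mathcal{S}(Q)$ rather than in $\mathcal{S}(Q)$ itself, requires some bookkeeping. Once the congruence is in place, injectivity is a direct computation and surjectivity follows from a standard graded Nakayama step.
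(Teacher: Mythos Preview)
Your overall strategy is sound and close in spirit to the paper's, but the surjectivity step contains a genuine gap. You assert that any $g\in\CC[x_{ij}]^W$ with $g\omega\in I^{ab}$ automatically satisfies $g\omega\in I^{ab}_q$. This is false: the generators \eqref{eq:qr} of $I^{ab}_q$ differ from those of $I^{ab}$ by a nonzero $q$-correction, so $I^{ab}\not\subset I^{ab}_q$ as ideals of $R[x_{ij}]$. Hence only the containment $\bar I_q\subset M$ (Martin's classical ideal) is clear, and your identification $I_q|_{q=0}=M$ is unjustified; the Nakayama step does not go through as written.

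The gap is easily repaired, and in fact Nakayama is unnecessary. Once you know $\Phi$ is a ring homomorphism, observe that the elementary symmetric polynomials $s^i_{(1^k)}$ all lie in $\mathcal S(Q)$ (since $s_i-r_i\geq 1$) and already generate $\overline{QH}(X_T)^W$ as an $R$-algebra; their images therefore generate $\overline{QH}(X_T)^W/I_q$, and since $\operatorname{Im}(\Phi)$ is an $R$-subalgebra containing them, $\Phi$ is surjective.

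The paper organises the argument in the opposite direction, building the map $\overline{QH}(X_T)^W/I_q\to QH(X_G)$. For well-definedness it invokes Martin's integration formula (Theorem~\ref{thm:int}): from $\widetilde{P_G}\cup\omega=P_T*\omega$ one gets $\int_{X_G}P_G\cup\beta=\int_{X_T}(P_T*\omega)\cup\tilde\beta\cup\omega=0$ for every $\beta$, so $P_G=0$ whenever $P_T\in I_q$. For injectivity it cites the Cox--Katz degeneration argument at $q\to 0$. Your route, by contrast, bypasses the integration formula entirely, using instead the $R$-linear independence of $\{\tilde\alpha\cup\omega\}$ inside the free $R$-module $\overline{QH}(X_T)$; this is a somewhat cleaner path once the ring-homomorphism step is secured.

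Finally, the worry you flag in your last paragraph is not an obstacle: both $\cup\,\omega$ and $*\,\omega$ are $R$-linear, so the Corollary extends immediately from individual $\alpha\in\mathcal S(Q)$ to any $R$-linear combination such as $\widetilde{\alpha*_G\alpha'}$.
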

\begin{proof}
Note that, as  can be seen by induction, for $\alpha_1,\dots,\alpha_k \in \mathcal{S}(Q)$,
\[\tilde{\alpha_1 *_G \cdots *_G \alpha_k} \cup \omega=\tilde{\alpha_1}* \cdots \tilde{\alpha_k} * \omega.\]
For a polynomial in elements of $\mathcal{S}(Q)$, with coefficients in $R$, we denote $P_G$ to be the element in $QH(X_G)$ obtained by interpreting the product as $*_G$. One can replace $\mathcal{S}(Q)$ with the lift, and multiply via $*$ -- we denote the result as $P_T \in \overline{QH}(X_T)$. Then for any such polynomial, 
\[\tilde{P_G}\cup \omega = P_T * \omega.\]
Now suppose $P_T * \omega \in I^{ab}_q$, and that $P_T$ is a polynomial in the lifted basis. Then for every element $\beta \in H^*(X_G)$, by Theorem \ref{thm:int}
\[\int_{X_G} P_G \cup \beta=\int_{X_T} \tilde{P_G} \cup \tilde{\beta} \cup \omega^2=\int_{X_T} (P_T*\omega) \cup \tilde{\beta} \cup \omega=0. \]
Therefore $P_G$ defines a relation in $QH(X_G)$. To see that all quantum relations are of this form, one can use the same argument as in the proof of Proposition 11.2.17 in \cite{coxkatz}: we can take the limit as $q_i\to 0$, where we know by Theorem \ref{thm:martin} these are all the relations. These relations must give all relations in a neighborhood of $q_i=0$, and therefore must be all the relations. 
\end{proof}

The quantum rim-hook removal now follows easily.
\begin{rem}[Notation] As above, recall that $s^0_\lambda=0$ for all $\lambda \neq \emptyset$. In addition, we also set $s^k_\lambda=0$ for all $\lambda \neq \emptyset$ when $k$ is not a vertex, i.e. when $k>\rho$, and $s^k_\emptyset=1$ for all $k \geq 0$. 
\end{rem}
\begin{thm}[Quantum rim-hook removal]\label{thm:qhookrim}
Let $\lambda$ a partition such that $\lambda_1 \geq s_i-r_i+1$. Then 
\begin{multline*} s^i_\lambda=\sum_{k=1}^{s_i} \sum_{\sum k_a=k} (-1)^{k+1} \HR^{s_i}(s_{\lambda+\{s_i-k\}}) \prod_{\substack{a\in Q_1 \\ t(a)=i}} s^{s(a)}_{(k_a)^t}\\
+(-1)^{r_i-1}q_i \sum_{k=0}^{s'_i} \sum_{\sum k_a=k} (-1)^{s_i'-k} \HR^{s_i}(s_{\lambda+\{s'_i-k\}}) \prod_{\substack{a\in Q_1 \\ s(a)=i}} s^{t(a)}_{(k_a)^t}.\end{multline*}
\end{thm}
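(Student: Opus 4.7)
}
The plan is to mimic the proof of the classical rim-hook rule (Proposition \ref{prop:hookrim}) almost verbatim, but feeding in the quantum-corrected relation \eqref{eqn:qlowerdegree} in place of the classical relation \eqref{eqn:lowerdegree}, and then using Theorem \ref{thm:arrowquantum} to transport the resulting identity from $\overline{QH}(X_T)$ down to $QH(X_G)$.

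Concretely, first I would start from the Jacobi--Trudi style determinantal formula
\[
\omega_i \, s^i_\lambda \;=\; \det\!\begin{bmatrix}
x_{i1}^{r_i-1+\lambda_1}& \cdots & x_{i r_i}^{r_i-1+\lambda_1}\\
\vdots & & \vdots\\
x_{i1}^{\lambda_{r_i}} & \cdots & x_{i r_i}^{\lambda_{r_i}}
\end{bmatrix},\qquad \omega_i=\prod_{j<k}(x_{ij}-x_{ik}),
\]
which by assumption has top row of degree $r_i-1+\lambda_1\geq s_i$ in each $x_{ij}$. Factor $x_{ij}^{s_i}$ out of that row and substitute for it using the quantum relation \eqref{eqn:qlowerdegree}; this replaces $x_{ij}^{s_i}x_{ij}^{r_i-1+\lambda_1-s_i}$ by a sum whose ``classical'' half has monomials $x_{ij}^{r_i-1+\lambda_1-k}$ multiplied by products of $s^{s(a)}_{(k_a)^t}$, and whose ``quantum'' half has monomials $x_{ij}^{r_i-1+\lambda_1-(s_i-s_i'+k)}$ multiplied by $(-1)^{r_i-1}q_i$ times products of $s^{t(a)}_{(k_a)^t}$. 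By multi-linearity of the determinant in the first row, each term on either side becomes a new determinant that differs from the original only by the exponent in the top row. Invoking the Bertram--Ciocan-Fontanine--Fulton identity
\[
\omega_i\,\HR^{s_i}(s_{\lambda+\{s_i-k\}})\;=\;\det\!\begin{bmatrix}
x_{i1}^{r_i-1+\lambda_1-k}& \cdots & x_{i r_i}^{r_i-1+\lambda_1-k}\\
\vdots & & \vdots\\
x_{i1}^{\lambda_{r_i}} & \cdots & x_{i r_i}^{\lambda_{r_i}}
\end{bmatrix}
\]
recalled in the paper, each summand is recognized as $\omega_i$ times an $\HR^{s_i}$ applied to a modified partition (and analogously for the quantum half with $s_i$ replaced by $s_i'$ in the offset bookkeeping). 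Dividing through by $\omega_i$ gives exactly the claimed formula, modulo the ideal $I^{ab}_q$.

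Finally, to conclude it is an identity in $QH(X_G)$ rather than merely in $\overline{QH}(X_T)$, I would invoke Theorem \ref{thm:arrowquantum}: the difference between $s^i_\lambda$ and the right-hand side lies in the ideal generated by $f$ with $\omega*f\in I^{ab}_q$, because the entire derivation above took place in $\overline{QH}(X_T)$ and multiplication by $\omega$ is precisely what turns the Jacobi--Trudi numerator into the determinant we manipulated. Since both sides lie in the lifted basis $\mathcal{S}(Q)$ by construction (and hence, by the corollary preceding Theorem \ref{thm:arrowquantum}, quantum multiplication by $\omega$ coincides with classical cup product by $\omega$), the passage down to $QH(X_G)$ is automatic and requires no change of variables.

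The only real subtlety, and the step I would check most carefully, is the exponent bookkeeping in the ``quantum half'' of the substitution. One must verify that after replacing $x_{ij}^{s_i}$ via \eqref{eqn:qlowerdegree}, the resulting exponent in the top row of the determinant is $r_i-1+(\lambda_1+s_i'-k)-s_i'$, i.e.\ that the modified partition really is $\lambda+\{s_i'-k\}$ with the rim-hook removal still of length $s_i$; this is where the sign $(-1)^{s_i'-k}$ and the convention $s^k_\emptyset=1$ for $k>\rho$ enter. Once these are tracked, the quantum rim-hook identity drops out by exactly the same multi-linearity argument as in Proposition \ref{prop:hookrim}.
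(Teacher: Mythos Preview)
Your proposal is correct and follows the paper's approach exactly: the paper's entire proof of Theorem~\ref{thm:qhookrim} is the sentence ``The proof is exactly as in the proof of Proposition~\ref{prop:hookrim},'' meaning one substitutes the quantum relation~\eqref{eqn:qlowerdegree} into the top row of the determinant and expands by multilinearity, just as you describe. Your final paragraph on descending from $\overline{QH}(X_T)$ to $QH(X_G)$ via Theorem~\ref{thm:arrowquantum} merely makes explicit what the paper takes for granted; one small correction is that ``both sides lie in $\mathcal{S}(Q)$'' is not literally true (the too-wide $s^i_\lambda$ is not a basis element), but this is harmless since in the Batyrev presentation of $\overline{QH}(X_T)$ the product $*$ is already polynomial multiplication modulo $I^{ab}_q$, so the appeal to the corollary about $\tilde\alpha*\omega=\tilde\alpha\cup\omega$ is unnecessary.
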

\begin{proof} The proof is exactly as in the proof of Proposition \ref{prop:hookrim}.
\end{proof}
\begin{eg}
Suppose $M_\theta(Q,\br)$ is the flag variety of quotients $\CC^n$, $\Fl(n;r_1,\dots,r_\rho)$.  Then $s^i_\lambda$ is the Schur polynomial in the Chern roots of the $i^{th}$ tautological quotient bundle on the flag variety (the one of rank $r_i$). The quantum rim-hook removal rule in this case states that if $\lambda_1 \geq r_{i-1}-r_i+1$, then 
\begin{multline*} s^i_\lambda=\sum_{k=1}^{r_{i-1}} (-1)^{k+1} \HR^{r_{i-1}}(s_{\lambda+\{r_{i-1}-k\}})s^{i-1}_{(k)^t}\\+(-1)^{r_i-1} q_i \sum_{k=0}^{r_{i+1}}(-1)^{r_{i+1}-k}\HR^{r_{i-1}}(s_{\lambda+\{r_{i+1}-k\}})s^{i+1}_{(k)^t}.\end{multline*}
As a specific example, consider the flag variety $\Fl(4;2,1)$. Then 
\begin{align*}
s^1_{\yng(3)}&=q_1,\\
s^1_{\yng(3,1)}&=q_1 s^2_{\yng(1)},\\
s^1_{\yng(3,2)}&=q_1(s^1_{\yng(1)}s^2_{\yng(1)}-s^1_{\yng(1,1)}),\\
s^2_{\yng(2)}&=q_2+s^1_{\yng(1)}s^2_{\yng(1)}-s^1_{\yng(1,1)}.
\end{align*}
\end{eg}
\begin{eg} Consider the quiver flag variety
\begin{center}
  \includegraphics[scale=0.5]{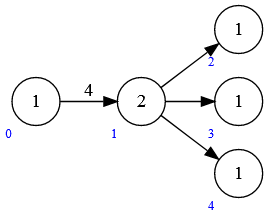}.
\end{center}
The vertex labels are in small print, in blue. 
Then 
\begin{flalign*}
s^1_{\yng(3)}&=q_1(s^2_{\yng(1)}s^3_{\yng(1)}+s^2_{\yng(1)}s^4_{\yng(1)}+s^3_{\yng(1)}s^4_{\yng(1)}-s^1_{\yng(1)}(s^2_{\yng(1)}+s^3_{\yng(1)}+s^4_{\yng(1)})+s^1_{\yng(2)}),\\
s^1_{\yng(3,1)}&=q_1(s^2_{\yng(1)}s^3_{\yng(1)}s^4_{\yng(1)}-s^1_{\yng(1,1)}(s^2_{\yng(1)}+s^3_{\yng(1)}+s^4_{\yng(1)})+s^1_{\yng(2,1)}),\\
s^1_{\yng(3,2)}&=q_1(s^1_{\yng(1)} s^2_{\yng(1)}s^3_{\yng(1)}s^4_{\yng(1)}-s^1_{\yng(1,1)}(s^2_{\yng(1)}s^3_{\yng(1)}+s^2_{\yng(1)}s^4_{\yng(1)}+s^3_{\yng(1)}s^4_{\yng(1)})+s^1_{\yng(2,2)}),\\
s^2_{\yng(2)}&=q_2+s^1_{\yng(1)}s^2_{\yng(1)}-s^1_{\yng(1,1)},\\
s^3_{\yng(2)}&=q_3+s^1_{\yng(1)}s^3_{\yng(1)}-s^1_{\yng(1,1)},\\
s^4_{\yng(2)}&=q_4+s^1_{\yng(1)}s^4_{\yng(1)}-s^1_{\yng(1,1)}.
\end{flalign*}
\end{eg}
\begin{eg} Consider the quiver flag variety
\begin{center}
  \includegraphics[scale=0.5]{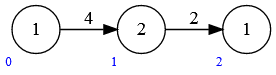}.
\end{center}
The rim-hook rule allows us to compute:
\begin{align*}
s^1_{\yng(3)}&=q_1(2 s^2_{\yng(1)}-s^1_{\yng(1)}),\\
s^1_{\yng(3,1)}&=q_1(s^2_{\yng(2)}-s^1_{\yng(1,1)}),\\
s^1_{\yng(3,2)}&=q_1(s^1_{\yng(1)} s^2_{\yng(2)}-2 s^1_{\yng(1,1)}s^2_{\yng(1)}),\\
s^2_{\yng(4)}&=q_2+2 s^1_{\yng(1)}s^2_{\yng(3)}-3 s^1_{\yng(1,1)} s^2_{\yng(2)}- s^1_{\yng(2)} s^2_{\yng(2)}+s^1_{\yng(2,1)}s^2_{\yng(1)}-s^1_{\yng(2,2)}.\\
\end{align*}
\end{eg}
\subsection{Relation to the Gu--Sharpe mirror}\label{sec:gusharpe}
In \cite{GuSharpe}, Gu--Sharpe construct a mirror for any Fano GIT quotient of a vector space. One key feature of a mirror, which is usually a Laurent polynomial, to a Fano variety is that the critical locus computes relations in the quantum cohomology ring. The Gu--Sharpe mirror is obtained by taking the Hori--Vafa mirror of the abelianised toric variety, together with the line bundles given by the roots $R$, and specialising the quantum parameters. We describe the construction here, as it gives different perspective on the signs appearing in the specialisation of the quantum parameters, the role of $\omega$, and is closely related to the $I$-function defined below. We then show that, as a corollary to Theorem \ref{thm:arrowquantum}, the Gu--Sharpe mirror indeed computes the quantum cohomology of a Fano quiver flag variety, providing evidence for the correctness of the proposal.

Recall the Hori--Vafa mirror of a toric variety. Let $X$ be a Fano toric variety constructed as the GIT quotient $\CC^m//T$, where $T$ is a torus of rank $r$, and the action of $T$ on $\CC^m$ is defined by weights $D_1,\dots,D_m \in \chi(T) \cong L$. Assume that the $D_i$ span the lattice $L$, and after re-ordering let $D_1,\dots,D_r$ be a basis. The Hori--Vafa mirror is defined by assigning a variable $x_i$ to each weight, and considering 
\[W:=x_1+\cdots+x_m.\]
The potential $W$ can be viewed as a map $(\CC^*)^m \to \CC$. The weights $D_1,\dots,D_m$ define a projection $\pi:(\CC^*)^m \to (\CC^*)^r$. Let $q=(q_1,\dots,q_r)$ be coordinates on the base, and $W_q$ the restriction of $W$ to a fiber of $\pi$. Using the choice of basis $D_1,\dots,D_r$, we can solve for $x_1,\dots,x_r$  in terms of the $q_i$ and the remaining variables, then write $W_q$ as a Laurent polynomial in the $x_{r+1},\dots,x_m$ and the $q_i$. We call $W_q$ the mirror to $X$.
\begin{eg}Consider $\prod_{i=1}^r \PP^{n-1}$, the abelianisation of the Grassmannian $\Gr(n,r)$. Then
\[W_q:=\sum_{i=1}^k x_{i1} + \cdots + x_{i n-1}+\frac{q_i}{x_{i1} \cdots x_{i n-1}}.\]
\end{eg}
By the mirror theorem for Fano toric varieties, the fibre-wise critical locus of $W_q$ computes the quantum cohomology relations of the toric variety. 

If $X_G$ is a Fano GIT quotient, the proposal of \cite{GuSharpe} is to consider the mirror $W_q$ not simply of the abelianisation $X_T$, but of $X_T$ together with the weights arising from the roots $R$ (i.e. of the total space $\bigoplus_{\alpha \in R} L_\alpha$ on $X_T$), and the specialise the quantum parameters. The proposal is that this is a mirror of $X_G$; in particular, they conjecture that the critical locus computes the quantum cohomology of $X_G$.  Let us give us the proposal explicitly in the case of a Fano quiver flag variety.  Let $X_G:=M_\theta(Q,\br)$, and let $X_T$ be the abelianisation. The weights of $X_T$ are given by the arrows of the abelianisation. Label the variables $x_a, a \in Q_1^{ab}$. There is a root for each pair of vertices $ij$ and $ik$; label the variable corresponding to this root  $y^i_{jk}, j \neq k.$ There should be a quantum parameter for each $q_{ij}$, but instead, we set $q_i=q_{ij}$ for all $j=1,\dots,r_i$. Then
\[W:=\sum_{a \in Q_1^{ab}} x_a +\sum_{i=1}^{\rho} \sum_{j=1}^{r_i} y^i_{jk}, \hspace{1mm} \frac{\prod_{a \in Q_1^{ab}, t(a)=ij} x_a}{\prod_{a \in Q_1^{ab}, s(a)=ij} x_a}\frac{\prod_{j \neq k} y^i_{kj}}{\prod_{j \neq k} y^i_{jk}}=q_i.\]
To solve for $W_q$, one must choose a lattice basis. Do this by first choosing an arrow for each vertex in $Q$: choose some $a_i, t(a_i)=i$. The lattice basis we choose for $\chi(T)$ is that given by the collection of arrows in $Q^{ab}$, $\mathcal{B}=\{a_{ij}\mid i \in \{1,\dots,\rho\}, j \in \{1,\dots,r_i\}\}$, where $a_{ij} \in Q_1^{ab}$ is the arrow from vertex $s(a)1$ to the vertex $ij$ associated to $a_i$. It's easy to see that this is indeed a lattice basis. One can then use the equations above to solve for $x_{a_{ij}}$ for each $ij$ (one might need to do several substitutions, but the process will end - the reason we make this choice is so that the quantum parameters match with those in the description above of the quantum cohomology); one will have
\[x_{a_{ij}}=q_i A_{ij} \frac{\prod_{j \neq k} y^i_{jk}}{\prod_{j \neq k} y^i_{kj}}\]
for some monomial $A_{ij}$ in the non-basis elements and $q_i$. We can then write $W_q$, the Gu--Sharpe mirror, by substituting the above for $x_{a_{ij}}$.

 To compute the critical locus, begin by setting $y^i_{jk} \frac{\partial}{\partial y^i_{jk}}W_q=0$. It is straightforward to see that on the critical locus,
 \[y^i_{jk}=-y^i_{kj}=x_{a_{ij}}-x_{a_{ik}}.\]
The critical locus of $W_q$ thus agrees with that of
\[W_q:=\sum_{a \in Q_1^{ab}} x_a +\sum_{i=1}^{\rho} \sum_{j=1}^{r_i} y^i_{jk}, \hspace{1mm}x_{a_{ij}}=(-1)^{r_i-1} q_i A_{ij} .\]
together with the constraint that $x_{a_{ij}}-x_{a_{ik}} \neq 0$ (as $y^i_{jk}$ appears in the denominator). The sign appearing in front of the $q_i$ agrees with specialisation defined above. By the mirror theorem for toric varieties, the critical locus of this Laurent polynomial is exactly the ideal $I^{ab}_q$ appearing in Theorem \ref{thm:arrowquantum}. To see this directly requires a change of basis, matching the $x_a$ with the $x_{ij}$ appearing as generators in the theorem. 

But we also need to take into account the condition that $x_{a_{ij}}-x_{a_{ik}} \neq 0$. After this change of basis, this becomes the condition that $x_{ij}-x_{ik} \neq 0$ for all $j \neq k$. As $\omega$ is the product of such factors, this can be translated as the condition that one can divide by $\omega$. Gu--Sharpe conjecture (for any Fano GIT quotient of a vector space, not just a quiver flag variety as described here) that the quantum cohomology ring relations are generated by Weyl invariant polynomials $f$ such that $\omega f$ lie in the Jacobi ideal of mirror. Comparing this with Theorem \ref{thm:arrowquantum}, we see that the proof of this conjecture follows as an immediate corollary. 
\section{Proof of the conjecture for Fano quiver flag varieties}\label{sec:proofconj}
In \cite{CiocanFontanineKimSabbah2008}, they prove Conjecture \ref{conj:original} for flag varieties. Moreover, they show in Theorem 4.3.6 that if $X$ is any Fano variety of index at least 2, with a sufficiently nice torus action, then the conjecture follows from the Abelian/non-Abelian correspondence for small (equivariant) $J$-functions. The condition on Fano index is used to apply Lemma 3.6.1(i) in the paper: above, we have shown that the statement of the lemma holds for any Fano quiver flag variety. The condition on Fano index can therefore be dropped.

The Abelian/non-Abelian correspondence for small equivariant $I$-functions is known for Fano GIT quotients of vector spaces, including Fano quiver flag varieties \cite{Webb2018}. The equivariant $I$-function is another generating series, related to the quasi-map theory of $M_Q$.  Therefore, Conjecture \ref{conj:original} will follow from showing that $I=J$ for Fano quiver flag varieties, and showing that there is a nice torus action. We'll first describe the torus action.
 
 \begin{mydef} Let $M_\theta(Q,\br)$ be a quiver flag variety. Define $T_Q:=\prod_{a \in Q_1} \CC^*$. The torus $T_Q$ acts on $\Rep(Q,\br)=\oplus_{a \in Q_1} \Mat(r_{s(a)},r_{t(a)})$ in the obvious way. This action commutes with the action of $G$, and so defines an action of $T_Q$ on $M_\theta(Q,\br)$.
 \end{mydef}
 \begin{rem} This is a generalisation of an action defined in \cite{Webb2018} for a particular quiver flag variety.
 \end{rem}
 \begin{rem}The action of $T_Q$ is not necessarily effective on $M_\theta(Q,\br)$. If we take the action of $T_Q/(G\cap T_Q)$ in the case when $Q$ is a flag variety, this gives the usual torus action. 
 \end{rem}
 
 The type of torus action required is one where the fixed points are isolated. We show that $T_Q$ has this property.
 \begin{prop}\label{prop:isolated} The fixed points of of the $T_Q$ action on $M_\theta(Q,\br)$ are isolated.
 \end{prop}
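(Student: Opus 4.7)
The plan is to prove this proposition by induction on the vertex $i$, using Craw's construction of $M_\theta(Q,\br)$ as the tower of Grassmannian bundles $M_i=\Gr\bigl(\bigoplus_{a:\,t(a)=i}W_{s(a)},\,r_i\bigr)\to M_{i-1}$. I would strengthen the statement to the joint assertion $P(i)$: the fixed locus $M_i^{T_Q}$ is a finite set, and at every $p\in M_i^{T_Q}$ and every $j\le i$, the fibre $(W_j)_p$ decomposes as a sum of one-dimensional $T_Q$-weight spaces with pairwise distinct weights, each of which, viewed in the character lattice $\chi(T_Q)=\bigoplus_{a\in Q_1}\ZZ\,\chi_a$, involves only characters $\chi_b$ indexed by arrows $b$ with $t(b)\le j$.

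The base case $i=1$ is straightforward: the unique-source hypothesis forces $M_1=\Gr(\CC^{n_{01}},r_1)$, on which $T_Q$ acts through its quotient $\prod_{a:\,0\to 1}\CC^{*}$ by independent scaling of each coordinate, so the fixed points are the $\binom{n_{01}}{r_1}$ coordinate quotients, each with $W_1$ spanned by distinct weight lines of weights $\chi_a$. For the inductive step, I would fix $p\in M_{i-1}^{T_Q}$ and study the fibre $\Gr(E_p,r_i)$ of $M_i\to M_{i-1}$ over $p$, where $E_p=\bigoplus_{a:\,t(a)=i}(W_{s(a)})_p$. From the definition of the $T_Q$-action on $\Rep(Q,\br)$, the action on the $a$-th summand of $E_p$ is the inherited action on $(W_{s(a)})_p$ twisted by $\chi_a$. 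By $P(i-1)$ the weights on $(W_{s(a)})_p$ involve only characters $\chi_b$ with $t(b)\le s(a)<i$, so every weight coming from the $a$-summand of $E_p$ has $\chi_a$-coefficient equal to $1$ and $\chi_{a'}$-coefficient $0$ for all other arrows $a'$ with $t(a')=i$. This forces weights coming from different summands of $E_p$ to be distinct, while distinctness within a single summand is part of $P(i-1)$.

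Once $E_p$ has a one-dimensional weight-space decomposition with pairwise distinct weights, the $T_Q$-fixed points of $\Gr(E_p,r_i)$ are precisely the finitely many coordinate $r_i$-quotients, and at each such fixed point $(W_i)_p$ inherits distinct weights lying in $\mathrm{span}\{\chi_b:t(b)\le i\}$. This completes $P(i)$, and applying the induction up to $i=\rho$ yields the proposition.

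The main obstacle is choosing the correct inductive hypothesis: the bare statement "fixed points are isolated" is \emph{not} preserved by passing to Grassmannian bundles, because a torus with repeated weights on the fibre produces positive-dimensional fixed loci in the relative Grassmannian. The extra bookkeeping that weights on each $(W_j)_p$ are pairwise distinct characters of $T_Q$, and moreover only involve arrows with target at most $j$, is exactly what propagates through the tower; once this is built into the inductive hypothesis, the rest is routine weight combinatorics using the independence of the $\chi_a$ in $\chi(T_Q)$.
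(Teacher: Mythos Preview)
Your argument is correct and complete. The strengthened inductive hypothesis is exactly what is needed: distinctness of the $T_Q$-weights on each $(W_j)_p$, together with the support condition that these weights involve only $\chi_b$ with $t(b)\le j$, propagates cleanly through the Grassmannian-bundle tower and forces the fixed locus in each fibre $\Gr(E_p,r_i)$ to be finite.

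Your route is different from the paper's. The paper does not induct on the tower; instead it invokes the closed embedding $M_\theta(Q,\br)\hookrightarrow Y=\prod_{i=1}^\rho \Gr(\tilde{s}_i,r_i)$ (with $\tilde{s}_i$ the number of paths $0\to i$), describes the induced $T_Q$-action on $Y$ explicitly---the column of $A_i$ indexed by a path $p$ is scaled by $\prod_{a\in p}t_a$---observes that distinct paths give distinct characters, and concludes that $Y^{T_Q}$ (hence $M_\theta(Q,\br)^{T_Q}$) is finite. Underneath, both arguments rest on the same combinatorial fact: the $T_Q$-weights appearing are sums $\sum_{a\in p}\chi_a$ over paths from the source, and the $\chi_a$ are linearly independent in $\chi(T_Q)$. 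The paper packages this globally via the path-indexed embedding (imported from \cite{kalashnikov}); you unwind it step by step along Craw's tower. Your version has the minor advantage of being self-contained within the tower description already given in the paper, at the cost of a slightly more elaborate inductive hypothesis; the paper's version is shorter once one accepts the embedding.
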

 \begin{proof}
 There is a natural embedding of $M_\theta(Q,\br)$ into a product of Grassmannians $Y=\prod_{i=1}^\rho Gr(\tilde{s_i},r_i)$ (see \cite{kalashnikov}). Here $\tilde{s_i}$ is the number of paths from $0 \to i$.  There is an induced action of $T_Q$ on $V_Y=\prod_{i=1}^\rho \Mat(r_i \times \tilde{s_i})$ which restricts on the quiver flag variety to the action defined above. Let $(A_i)_{i=1}^\rho \in V_Y$ be a point in $V_Y$.  We think of the columns of $A_i$ as being indexed by paths from $0 \to i$. Then the action of $(t_a)_{a \in T_Q}$ on $(A_i)_{i=1}^\rho$ is given by multiplying the $kp^{th}$ entry of $A_i$ by $\prod_{a \in p} t_a$. The factor multiplying each column of $A_i$ is distinct, as it corresponds to a distinct path. Fixed points of the $T_Q$ action on $Y$ are thus isolated, and thus so are the fixed points for the restricted action. 
 \end{proof}
 
 Fix a Fano quiver flag variety $M_Q:=M_\theta(Q,\br)$, and let $M_{Q^{ab}}$ denote its abelianisation. In \cite{Webb2018}, the author gives a closed formula for the equivariant $I$-function of a GIT quotient of the form $V//G$; we now define this formula. Our interest in the $I$-function arises in the fact that it coincides with the small equivariant $J$ function for any Fano GIT quotients up to a mirror map; we will show that this mirror map is trivial for $M_Q$, so that in this case $I=J$. Let $\lambda_a$ be the equivariant parameters for the $T_Q$ action. The coefficients of the $I$-function are built out of two factors. Recall that $D_a$ is the divisor associated to the arrow of $a\in Q_1$. 

Given $\tilde{d} \in \NE_1(M_{Q^{\ab}})\cong \prod_{i=1}^\rho \prod_{j=1}^{r_i} \ZZ_{\geq 0}$,
\[
I_{Q^{ab}}(\tilde{d})=\frac{\prod_{a \in Q_1^{ab}} \prod_{m \leq 0} (D_a+\lambda_a +m z)}{\prod_{a\in Q^{ab}_1} \prod_{m \leq \langle \tilde{d}, D_a \rangle} (D_a+\lambda_a+m z)},
\]
and
\[
I_{R}(\tilde{d})=\frac{\prod_{\alpha \in R} \prod_{m \leq \langle \tilde{d}, D_\alpha \rangle} (D_\alpha+m z)}{\prod_{\alpha \in R} \prod_{m \leq 0} (D_\alpha+m z)}.
\]
Recall that $R$ is the set of roots. Notice that all but finitely many factors cancel in products. We can write
\[I_{R}(\tilde{d})=\prod_{i=1}^{\rho}(-1)^{\epsilon(d)}\frac{\prod_{\alpha \in R^+} (D_\alpha+\langle \tilde{d}, D_\alpha \rangle z)}{\prod_{\alpha \in R^+}D_\alpha}.\]
  Here $d$ is the image of $\tilde{d}$ under the projection $\NE_1(M_{Q^{ab}}) \to \NE_1(M_Q)$ and $\epsilon(d)=\sum_{i=1}^\rho d_i(r_i-1)$. The projection is given explicitly as $(d_{ij})_{ij} \mapsto (\sum_{j=1}^{r_i} d_{ij})_{i=1}^\rho$. The product
  \[I_{Q^{ab}}(\tilde{d}) \prod_{\alpha \in R}\prod_{\alpha \in R^+} (D_\alpha+\langle \tilde{d}, D_\alpha \rangle z)\]
is $W$-anti-invariant and hence divisible by $\omega$. 

Let $\tau \in H^0(M_Q)\oplus H^2(M_Q)$. Define the $I$-function of $M_Q$ to be
\[
I_{M_Q}(\tau,z) =e^{\tau/z} \sum_{d \in \NE_1(M_Q)} \sum_{\tilde{d} \to d} I_{Q^{ab}}(\tilde{d})I_{R}(\tilde{d}).
\]
\begin{rem}This is the $I$-function one would obtain for the toric variety $M_{Q^{ab}}$ together with the line bundles $L_\alpha, \alpha \in R$: precisely the same process as gives the Gu--Sharpe mirror. 
\end{rem}
We set $I_{M_Q}(z)=I_{M_Q}(0,z)$. Note that $I_{T_{M_Q}}(\tilde{d})$ is homogeneous of degree 
\[\langle -K_{M_{Q^{ab}}},\tilde{d}\rangle=\langle-K_{M_Q},d\rangle,\] so defining the grading of $q^d$ to be $\langle-K_{M_Q},d\rangle$, $I_{M_Q}(z)$ is homogeneous of degree $0$. As $M_Q$ is Fano, we can write $I_{T_{M_Q}}(\tilde{d})$ as
\begin{equation}\label{eqn:powerseries} z^{\langle-K_{M_Q},\tilde{d}\rangle}(b_0+b_1 z^{-1}+b_2 z^{-2} +\cdots), b_i \in H^{2i}(X).\end{equation}
If $\langle -K_{M_{Q^{ab}}},\tilde{d}\rangle \geq 2$ for all effective $\tilde{d}$ (a property sometimes called being of Fano index at least 2), we see immediately that the the coefficient of $z^{-1}$ vanishes in $I_{M_Q}$. We now show that any Fano quiver flag variety has this property.
\begin{prop} The coefficient of $z^{-1}$ vanishes in the power series expansion of $I_{M_Q}(z)$.
\end{prop}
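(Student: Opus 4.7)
The plan is to combine the homogeneity of each summand in $I_{M_Q}(z)$ with a structural constraint coming from the Lagrangian cone description of $I$-functions. By the homogeneity observation just above, the Laurent polynomial $I_{Q^{ab}}(\tilde d) I_R(\tilde d)$ expands as $\sum_i b_i^{\tilde d} z^{\langle -K_{M_Q}, d\rangle - i}$ with $b_i^{\tilde d} \in H^{2i}(M_Q)$, so its coefficient of $z^{-1}$ equals $b^{\tilde d}_{\langle -K_{M_Q}, d\rangle + 1}$ and lives in cohomological degree $2(\langle -K_{M_Q}, d\rangle + 1)$.

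For $d = 0$, only $\tilde d = 0$ contributes and the summand is the constant $1$, giving no $z^{-1}$-term. For $d \neq 0$ effective, the Fano condition $\langle -K_{M_Q}, d\rangle \geq 1$ forces the $q^d$-coefficient of the $z^{-1}$-part of $I_{M_Q}(z)$ to live in $\bigoplus_{k \geq 2} H^{2k}(M_Q)$. On the other hand, by \cite{Webb2018}, $I_{M_Q}(z)$ lies on Givental's Lagrangian cone for the small quantum cohomology of $M_Q$; combined with the fact that the small $J$-function satisfies $J_{M_Q}(\tau, z) = e^{\tau/z}(1 + O(z^{-2}))$ with $\tau \in H^0 \oplus H^2$, this forces the $z^{-1}$-coefficient of $I_{M_Q}(z)$ to coincide with the mirror map value and hence to take values in $H^0 \oplus H^2$.

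Matching the two constraints $q^d$-coefficient by $q^d$-coefficient: each coefficient with $d \neq 0$ lies simultaneously in $\bigoplus_{k \geq 2} H^{2k}(M_Q)$ and in $H^0 \oplus H^2$, and hence vanishes; the $d = 0$ coefficient vanishes trivially. The main obstacle is verifying the second ingredient in enough generality — that $I_{M_Q}(z)$ is really a point of the Lagrangian cone with mirror map in $H^{\leq 2}$ — within the quasimap framework of \cite{Webb2018}. If this cannot be invoked directly, a backup route is a direct Weyl-antisymmetry computation on the leading $z$-coefficient of $I_R(\tilde d)$, which is a Vandermonde-like factor $\prod_{\alpha \in R^+}\langle \tilde d, D_\alpha\rangle$: summing over the lifts $\tilde d \to d$ produces pairwise cancellations that are especially tractable because the constraint $\langle -K_{M_Q}, d\rangle$ being small forces the set of admissible lifts to be very restricted.
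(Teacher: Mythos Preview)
Your argument has a genuine gap in its second ingredient. You claim that because $I_{M_Q}(z)$ lies on the Lagrangian cone and the small $J$-function satisfies $J(\tau,z)=e^{\tau/z}(1+O(z^{-2}))$ for $\tau\in H^0\oplus H^2$, the $z^{-1}$-coefficient of $I_{M_Q}(z)$ must land in $H^0\oplus H^2$. This reverses the logic. Lying on the cone says only that $I_{M_Q}(z)=J^{\mathrm{big}}(\tau(q),z)$ for some $\tau(q)\in H^*(M_Q)$, with no a priori restriction on the cohomological degree of $\tau(q)$; the $z^{-1}$-coefficient of a point on the cone is its base point in the \emph{full} cohomology. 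Showing that $\tau(q)$ actually lies in $H^{\le 2}$ (indeed, vanishes) is precisely the content of the proposition --- it is what allows one to replace the big $J$-function by the small one and conclude $I=J$. Invoking it here is circular. Your own caveat about ``verifying the second ingredient'' already flags this, and it cannot be resolved by citation: neither \cite{Webb2018} nor the quasimap wall-crossing of \cite{quasimap} constrains the mirror map to low degree.

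There is also a sign issue in the first step. The combined homogeneity degree of $I_{Q^{ab}}(\tilde d)I_R(\tilde d)$ is $-\langle -K_{M_Q},d\rangle$ (the displayed exponent in \eqref{eqn:powerseries} has the wrong sign, as the sentence immediately following it makes clear), so the correct expansion reads $\sum_i b_i^{\tilde d}\,z^{-\langle -K_{M_Q},d\rangle-i}$ with $b_i^{\tilde d}\in H^{2i}$. The $z^{-1}$-coefficient then requires $i=1-\langle -K_{M_Q},d\rangle\le 0$, hence $i=0$, so it lies in $H^0$, not in $H^{\ge 4}$. Thus even granting the second step, the intersection you obtain is $H^0$, not $\{0\}$.

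The paper's proof is direct and avoids all of this. Homogeneity (with the correct sign) reduces the question to curve classes with $\langle -K_{M_Q},d\rangle=1$; since $M_Q$ is Fano and graph-reduced one has $s_i-s_i'\ge 1$ for every $i$, so this forces any lift $\tilde d$ to be a single basis vector $e_{ij}$. For such $\tilde d$ one writes $I_{Q^{ab}}(e_{ij})I_R(e_{ij})$ explicitly: there are $s_i$ factors of the form $(D_a+\lambda_a+z)^{-1}$ from incoming arrows and only $r_i-1$ factors linear in $z$ from the roots, so the highest power of $z$ appearing is $z^{-(s_i-r_i+1)}\le z^{-2}$ because $s_i>r_i$. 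No Lagrangian-cone input is needed, and no cancellation over Weyl-translates is required either --- each individual lift already lacks a $z^{-1}$-term, which is simpler than your backup route suggests.
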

\begin{proof}
By the above discussion, a $z^{-1}$ term appears only if $\langle -K_{M_{Q^{ab}}},\tilde{d}\rangle = 1$ for some effective $\tilde{d}$. If $\tilde{d}=(d_{ij})$, then it isn't hard to see that  
\[\langle -K_{M_Q},\tilde{d}\rangle =\sum_{i=1}^\rho \sum_{j=1}^{r_i} d_{ij}(s_i-s_i').\]
Since $M_Q$ is Fano, and $Q$ is graph reduced, $s_i-s_i'>0$ for all $i$, so the only way this pairing is $1$ is if $\tilde{d}$ has exactly one non-zero entry, which is one. Say this entry is at place $ij$, so $\tilde{d}=e_{ij}$. Then we can consider  
\[I_{Q^{ab}}(\tilde{d})I_{R}(\tilde{d})=\prod_{a \in Q^{ab}_1, t(a)=ij} \frac{1}{D_a+\lambda_a+z} \prod_{a \in Q^{ab}_1, s(a)=ij}D_a \prod_{k \neq j} \frac{D_{ij}-D_{ik}+z}{-D_{ij}+D_{ik}}. \]
Expanding $\frac{1}{D_a+\lambda_a+z}$ as a power series, we see that the lowest degree term in $z^{-1}$ has degree at least
\[s_i -(r_i-1) \geq 2,\]
where the inequality follows from the fact that $s_i-r_i >0$. 
\end{proof}
Therefore the asymptotic expansion of $I_{M_Q}(z)$ is
\[1+O(1/z^2),\]
and by \cite{quasimap} we have $I(0,z)=J(0,z)$. We can now prove Theorem \ref{thm:conjecture}.
\begin{proof}
By \cite{Webb2018} the Abelian/non-Abelian correspondence holds for small $I$-functions, as $I=J$ it therefore also holds for small $J$-functions. By \cite{CiocanFontanineKimSabbah2008}, this implies Conjecture \ref{conj:original}.
\end{proof}

\bibliographystyle{amsplain}
\bibliography{bibliography}
\end{document}